\newtheorem{theorem}{Theorem}
\newtheorem{remark}{Remark}
\begin{document}

%\makeatletter
%\def\ps@pprintTitle{%
  %\let\@oddhead\@empty
  %\let\@evenhead\@empty
  %\let\@oddfoot\@empty
  %\let\@evenfoot\@empty}
%\makeatother

\begin{frontmatter}

\title{Data-Driven Parameter Identification for Tumor Growth Models}

\author[1]{Liu Liu}
\author[2]{Yifei Wang}
\author[2]{Qinyu Xu}
\author[2]{Xiaoqian Xu}

\affiliation[1]{organization={The Chinese University of Hong Kong},
               addressline={Department of Mathematics},
               city={Shatin},
               postcode={999077},
               state={New Territories},
               country={Hong Kong, China}}

\affiliation[2]{organization={Duke Kunshan University},
               addressline={Zu Chongzhi Center}, 
               city={Kunshan},
               postcode={215316}, 
               state={Jiangsu},
               country={China}}

\begin{abstract}
Modeling tumor growth accurately is essential for understanding cancer progression and informing treatment strategies. To estimate the parameters in the tumor growth model described by a nonlinear PDE, we adopt Physics-Informed Neural Networks (PINNs) \cite{CLKD2020} and DeepONet \cite{lu2021learning}, which show advantages especially when the observation data is scarce and contains noise. With the help of real-life lab data, we have demonstrated the potential of applying deep learning tools to address data-driven modeling for tumor growth in biology.
%In this study, we use Physics-Informed Neural Networks (PINNs) to estimate tumor growth parameters in a nonlinear partial differential equation (PDE) model describing tumor cell density over space and time. PINNs are trained by integrating physical laws with observed data. Results show that it can recover the unknown parameters and reproduce tumor growth dynamics consistent with unseen
%data, demonstrating its potential as a powerful tool for data-driven biological modeling.
\end{abstract}

\begin{keyword}
tumor growth modeling, porous medium equation, Physics-informed neural networks, parameter identification
\end{keyword}
\end{frontmatter}

\section{Introduction}
Understanding and predicting the behavior of solid tumors is a central goal in mathematical biology. Accurate modeling of tumor growth is not only essential for understanding cancer progression, but also plays a vital role in designing treatment strategies, optimizing intervention timing, and predicting patient-specific outcomes. In the early stages of tumor development, before angiogenesis occurs, tumor growth is primarily driven by the diffusion of nutrients and waste products through surrounding tissues. Classical models, such as those proposed by Greenspan~\cite{greenspan1972models}, describe how diffusion-limited growth can lead to characteristic tumor structures, including a necrotic core surrounded by a proliferating rim. For the more modern discussion about the tumor model, one can check \cite{cristini2003nonlinear,friedman2001symmetry,cristini2010multiscale,cristini2017introduction,araujo2004history,byrne2006modelling,roose2007mathematical,lowengrub2009nonlinear,perthame2016some}.
%As the tumor enlarges and nutrient transport becomes insufficient in the interior, cell death can occur, giving rise to a necrotic region. However, in this research, we focus on earlier stages of tumor development where necrosis is negligible or absent, and thus the formation of a necrotic core is not considered.

To capture the nonlinear and spatially heterogeneous nature of tumor growth, modern approaches often involve nonlinear partial differential equations (PDEs) incorporating density-dependent diffusion and proliferation. A central component of such models is the proliferation coefficient, which governs how quickly tumor cells grow in response to available resources. Accurately estimating this coefficient (or coefficients) is critical, as it directly relates to the tumor's intrinsic growth rate and its potential response to treatment. However, these parameters are rarely directly measurable in experimental or clinical settings, making their inference from partial and noisy observations a challenging but necessary task.

Previous studies have successfully employed statistical and computational inversion techniques such as Bayesian inference~\cite{falco2023quantifying,feng2024unified} to estimate biological problems like tumor growth parameters, providing valuable probabilistic insights and a principled framework for quantifying uncertainty in parameter estimation. % Building on the success of these approaches, 
In this study we explore an alternative method, Physics-Informed Neural Networks (PINNs)\cite{raissi2019physics}, which integrates physical laws directly into the training of neural networks to enable efficient and robust parameter estimation, even when the available data are sparse, noisy, or partially observed \cite{wang2024neural}.

%Machine Learning, Deep neural networks (DNNs) in particular, have gained increasing interest in approximating the solutions of partial differential equations (PDE) due to their universal approximation property and ability to handle high-dimensional problems. PINNs are a class of deep learning methods that incorporate physical laws, expressed as PDEs, directly into the training process of the neural networks \cite{raissi2019physics}. 

%Unlike conventional neural networks that rely solely on data, PINNs are trained to satisfy both the observed data and the underlying governing equations, allowing them to infer unknown solutions or parameters with greater physical consistency. 
PINNs have shown remarkable promise in various applications, including solving both forward and inverse problems \cite{CLKD2020, MPG2019,chen2020physics,lou2021physics,mao2020physics}. Recent studies have demonstrated the effectiveness of PINNs in parameter estimation across diverse biomedical contexts, including blood flow dynamics \cite{kissas2020machine} and cardiac electrophysiology \cite{sahli2020physics}, underscoring its potential as a powerful tool for data-driven discovery and quantitative prediction in complex biomedical systems. Furthermore, this method offers major advantages for tumor growth modeling: by leveraging the underlying physics it allows the model to generalize well even with limited data, and enables robust inference of unknown parameters based on the observed spatiotemporal tumor density. In this work, we aim to validate and apply PINNs frameworks to study parameters prediction for tumor growth models, building on its theoretical convergence properties.
Complementary to PINNs, Deep Operator Networks (DeepONets) \cite{lu2021learning} provide a distinct but related framework for scientific machine learning. Rather than solving a single PDE instance, DeepONets learn the solution operator that maps input functions (such as initial conditions or spatially varying parameters) to output functions, enabling rapid evaluation across a family of PDE problems. This operator-learning perspective makes DeepONets particularly well-suited for scenarios where multiple parameter configurations must be explored, or where the initial tumor density is unknown and treated as a functional input. In this work, we aim to validate and apply both the PINNs and DeepONet frameworks to study parameter prediction for tumor growth models, building on their respective theoretical convergence properties.
%solve the inverse problem of parameter estimation in tumor growth models. 
Before applying the PINNs framework to real observations, we first validate its reliability through a two-step verification process:

\textbf{Step 1: Recovering the proliferation rate from synthetic tumor growth data.} 
We generate synthetic tumor growth data by using an efficient numerical scheme developed in \cite{liu2018accurate} to solve the underlying PDE 
for a range of proliferation rates $v$. 
The PINNs model is trained to recover these parameters from the simulated spatiotemporal tumor density data, then we compare the approximated values with the ground truth to assess accuracy.

\textbf{Step 2: Robustness to noisy data.} 
To emulate real-world experimental conditions, controlled Gaussian noise is added to the synthetic numerical data. The PINNs framework is retrained on these noisy datasets, and the resulting parameter predictions are analyzed to evaluate convergence behavior and sensitivity to different noise levels. This step not only tests the noise tolerance of the framework, but it also justifies its capability to real experimental data.

Upon validation of the framework's performance on synthetic tumor growth data and its robustness to noisy data, we further apply it to actual observed tumor growth data, demonstrating its practical capability under real-world experimental conditions. We make the biologically reasonable assumption of radial symmetry in tumor growth, with a radially symmetric initial condition and zero Dirichlet boundary conditions at domain edges. The training strategy leverages limited observed data, by using a separate testing set for validation the results show that the predicted tumor growth patterns match the testing data well with relative errors below 5\% in tumor radius prediction. Furthermore, we extend the framework to more complex inverse problem scenarios, including various sources of parameters such as spatially varying proliferation rates and initial conditions. 

%This study establishes PINNs as a powerful computational framework for addressing inverse problems in tumor growth modeling, with particular strength in handling data-scarce and noisy environments typical of biomedical applications. By providing a robust methodology for inferring biologically critical parameters from limited observations, our work enables more accurate characterization of tumor progression dynamics. This capability opens new avenues for understanding cancer mechanisms and contributes to the development of personalized treatment strategies, ultimately advancing precision oncology through improved model-based prediction of therapeutic outcomes. 

%This study shows that PINNs offer a powerful computational framework to address inverse problems in tumor growth modeling. Supported by both theoretical convergence guarantees and validation for test case using synthetic data, the framework shows advantages in handling data-scarce and noisy environment which is challenging yet typical in biomedical applications. The methodology provides a reliable approach for parameter estimation in tumor growth models, providing new pathways to push further study in this field with real-life applications in the lab. 
While PINNs and DeepONet frameworks have been widely explored, their application to tumor growth modeling--particularly within the framework of nonlinear Porous Medium Equations (PME)--remains challenging due to the lack of density data in clinical settings. We emphasize that many of the literature on this topic uses synthetic data obtained from traditional numerical solvers of the underlying PDEs, whereas we use the real experimental data from the biology lab. The main contribution of our work lie in the following: (i) establishing a robust PINNs and DeepONet framework for PME-based tumor models and study its inverse problems; (ii) introducing a binary-label-informed loss function (see equation (27)) that bridges the gap between raw experimental image data and quantitative parameter identification; (iii) providing theoretical convergence analysis for the proposed neural network approach, which justifies its use in the complex, data-scarce and noisy environments typical of biological research. We provide reliable approaches for parameter estimation in tumor growth models, providing new pathways to push further  this field with real-life applications. To the best of our knowledge, it is rare in the literature to present both experimental lab data and theoretical convergence analysis within the same study. By enabling accurate parameter identification from binary-labeled images and scarce or noisy experimental data, this work offers a data-driven approach to track tumor progression more objectively. We believe the results achieved in this work can serve as a reproducible and robust computational tool for the applied mathematics and oncology community, providing a foundation to help and guide clinical decision-making in reality.

The rest of the paper is organized as follows. Section~\ref{Modeling of Tumor Growth} introduces the mathematical formulation of the tumor growth model, including the governing porous medium equation, initial–boundary conditions, and modeling assumptions. Section~\ref{Methods} describes the PINNs framework used throughout this work, detailing the network architecture, construction of the loss function, and the overall training procedure for solving inverse problems. Section~\ref{Analysis} provides theoretical analysis for the proposed approach, including the convergence properties of both the loss function and the neural network solution. In Section~\ref{Preliminary}, we present preliminary validation based on synthetic tumor density data generated from numerical solvers, examining parameter recovery and further validating robustness in the presence of noise. Section~\ref{Prediction of Proliferation Rate Based on Observed Data} applies the framework to experimentally observed tumor growth data using binary-labeled measurements, then assessing its predictive performance by comparing the predicted tumor radii with testing data to quantify the radius prediction error. In Section~\ref{Application to Multiple Unknown Parameters}, the PINNs framework is applied to more complex scenarios involving spatially varying proliferation rates and unknown initial tumor density. Finally, in Section~\ref{Conclusion} we conclude the study and outline possible future research. 

\section{Modeling of tumor growth}
\label{Modeling of Tumor Growth}
We introduce the tumor growth model which is described by porous medium equation (\cite{perthame2016some}): 
\begin{equation}
\begin{aligned}
\rho_t + \nabla \cdot (\rho \mathbf{v}) &= \mathbf{g}(x,y,t) \rho, \\
\rho(x,y, 0) &= \rho_0(x,y), 
\end{aligned}
\label{eq: PDEs}
\end{equation}
where $\rho(x,y,t)$ presents the tumor cell density, $\mathbf{v}$ is the velocity of cells, and $g(x,y,t)$ is the proliferation rate of cells. According to the Darcy’s law:
\begin{equation}
\begin{aligned}
\mathbf{v} = -\nabla p,
\end{aligned}
\label{eq: Darcy}
\end{equation}
$p$ denotes the mechanical pressure generated by the crowded tumor cells, which is given by:
\begin{equation}
\begin{aligned}
p = P_m(\rho) := \frac{m}{m - 1} \rho^{m - 1}, \quad m \geq 2,
\end{aligned}
\label{eq: pressure}
\end{equation}

$m$ denotes the nonlinearity exponent in the constitutive relation between cell density and pressure, controlling how strongly the tumor pressure increases with cell density. Such density-dependent diffusion models and their connections to Darcy-type flow and free-boundary tumor growth dynamics have been systematically analyzed in Liu et al.~\cite{liu2019analysis}, providing a theoretical foundation for porous-medium formulations of tumor evolution.

Then we can get a family of models \(\{(P_m)\}_{m=2}^\infty\) with similar structure but different constitutive relation:

\begin{equation}
\begin{aligned}
\begin{cases} 
\rho_{t} - \Delta \rho^{m} = \mathbf{g}(\mathrm{x},\mathrm{y}, t) \rho, \\ 
\rho(\mathrm{x},\mathrm{y}, 0) = \rho_{0}(\mathrm{x},\mathrm{y}). 
\end{cases}
\end{aligned}
\label{eq: m_PDE}
\end{equation}
$\mathbf{g}(\mathrm{x},\mathrm{y}, t)$ in \eqref{eq: m_PDE} refers to the proliferation rate related to time and space which we want to predict. 

To begin with, we assume $\mathbf{g}(\mathrm{x},\mathrm{y}, t)$ is a constant called $v$. As mentioned in \cite{falco2023quantifying}, Falco et al. successfully employed Bayesian methods to predict tumor dynamics when choosing the parameter $m$ as 2, demonstrating that the Bayesian inference-based model can effectively capture the tumor growth dynamics. To further investigate the dynamics under different parameter regimes, we now set $m=3$ and utilize PINNs for prediction. 

Therefore, the tumor growth model that we address turns out to be:
\begin{equation}
\begin{aligned}
\rho_{t} - \Delta \rho^{3} = v \rho. 
\end{aligned}
\label{eq: 3_PDE}
\end{equation}
The initial tumor distribution is set to be a patch, such as:
\begin{equation}
\begin{aligned}
\rho_0(x,y) =
\begin{cases}
1, & \text{if } x^2 + y^2 < 0.25, \\
0, & \text{otherwise},
\end{cases}
\end{aligned}
\label{eq: initial}
\end{equation}

and homogeneous Dirichlet boundary conditions are imposed on all edges (i.e., $\rho = 0$ on $\partial\Omega$ for all $t$).

\section{Methods}
\label{Methods}
We consider a feedforward neural network with $L$ layers that maps the input 
$z^{(0)} = (t, x, y) \in \mathbb{R}^d$ (time and spatial coordinates) to the output 
$u_{\boldsymbol{\theta}}(t, x, y)$, which is the tumor density predicted by PINNs (here we use $u$ in our algorithm to replace $\rho$ in Equation \eqref{eq: m_PDE}). The layer-wise propagation is defined as follows \cite{liu2025asymptotic}:

\begin{equation}
\begin{aligned}
& z^{(l)} = \sigma \left( W^{(l)} z^{(l-1)} + b^{(l)} \right), \quad l = 1, 2, \ldots, L - 1, \\
& u_{\boldsymbol{\theta}}(t, x, y) = W^{(L)} z^{(L-1)} + b^{(L)}.
\end{aligned}
\label{eq: PINNs}
\end{equation}

Here:
\begin{itemize}
    \item $d$: dimension of the input space ($d=3$ for $(t,x,y)$),
    \item $z^{(0)} \in \mathbb{R}^d$: the input vector (time $t$ and spatial coordinates $x, y$),
    \item $n_l$: number of neurons in the $l$-th layer,
    \item $z^{(l)} \in \mathbb{R}^{n_l}$: the output of the $l$-th layer (hidden representation),
    \item $W^{(l)} \in \mathbb{R}^{n_l \times n_{l-1}}$: the weight matrix of the $l$-th layer,
    \item $b^{(l)} \in \mathbb{R}^{n_l}$: the bias vector of the $l$-th layer,
    \item $\sigma(\cdot)$: nonlinear activation function,
    \item $u_{\boldsymbol{\theta}}(t, x, y)$: the neural network output that approximates the PDE solution,
    \item $\boldsymbol{\theta} = \{ W^{(l)}, b^{(l)} \}_{l=1}^L$: the set of all trainable network parameters.
\end{itemize}

\begin{figure}[H]
\centering
\includegraphics[width=1\linewidth]{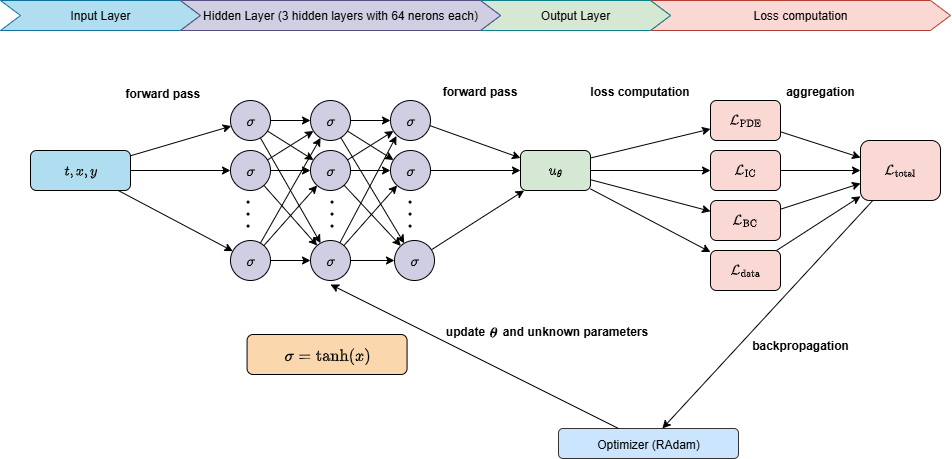} \\
\caption{PINNs setup and framework}
\label{framework}
\end{figure}
The basic PINNs setup and the composition of the loss function are depicted in Figure~\ref{framework}. In this work, the total loss function consists of four terms, one term related to data losses ($\mathcal{L}_{\text{data}}$) and three terms related to PDE losses ($\mathcal{L}_{\text{PDE}}, \mathcal{L}_{\text{IC}},\mathcal{L}_{\text{BC}}$), yielding the loss function with proper weights $w_1,w_2,w_3,w_4$:
\begin{equation}
\begin{aligned}
\mathcal{L}_{\text{total}} = w_1\mathcal{L}_{\text{PDE}} + w_2\mathcal{L}_{\text{IC}} + w_3\mathcal{L}_{\text{BC}} + w_4\mathcal{L}_{\text{data}}.
\end{aligned}
\label{eq: loss funtion}
\end{equation}
Typically, the losses are computed as the traditional mean-squared errors obtained after training the neural network:
\begin{equation}
\begin{aligned}
\mathcal{L}_{\text{PDE}} = \frac{1}{N} \sum [u_t - 6u(u_x^2+u_y^2)-3u^2(u_{xx} + u_{yy}) - vu]^2,
\end{aligned}
\label{eq: PDE loss}
\end{equation}

\begin{equation}
\begin{aligned}
\mathcal{L}_{\text{IC}} = \frac{1}{N_1} \sum (u - u_0)^2 |_{t=0},
\end{aligned}
\label{eq: IC loss}
\end{equation}

\begin{equation}
\begin{aligned}
\mathcal{L}_{\text{BC}} = \frac{1}{N_1} \sum (u - u_{\partial\Omega})^2 |_{x,y \in \partial \Omega},
\end{aligned}
\label{eq: BC loss}
\end{equation}

\begin{equation}
\begin{aligned}
\mathcal{L}_{\text{data}} = \frac{1}{N_2} \sum_i (u_i - \hat{u_i})^2,
\end{aligned}
\label{eq: data loss}
\end{equation}

To obtain the sampling points, we utilize torch.rand(n, 1) in PyTorch, which generates values uniformly distributed over \( [0, 1) \). In Equations \eqref{eq: PDE loss},\eqref{eq: IC loss},\eqref{eq: BC loss},\eqref{eq: data loss}, $\mathcal{L}_{\text{PDE}} $ quantifies how well the model satisfies the PDE governing tumor density evolution over $N$ collocation points in the spatiotemporal domain. $\mathcal{L}_{\text{IC}} $ enforces the initial conditions by penalizing deviations between the predicted density and the known initial state at time $t=0$ over $N_1$ points sampled from the initial condition. $\mathcal{L}_{\text{BC}}$ ensures the solution adheres to prescribed spatial constraints $u_{\partial \Omega}$ along the domain boundaries $\partial \Omega$. Like $\mathcal{L}_{\text{IC}} $, it is evaluated over $N_1$ boundary points. Lastly,  $\mathcal{L}_{\text{data}} $
quantifies agreement with experimental measurements by comparing the model’s predictions $\hat{u_i}$ against observed tumor densities $u_i$. This data fidelity term is averaged over $N_2$ measurement points.

Specifically, in this study, a total of $N = 2000$ collocation points are randomly sampled within the spatial-temporal domain $x \in [-3, 3]$, $y \in [-3, 3]$, $t \in [0, 1]$. For each of the four spatial boundaries (left, right, up, down), we sample $N_1 = 100$ points, resulting in a total of $4 \times 100 = 400$ boundary points. Additionally, $N_1 = 100$ points are sampled at the initial time $t = 0$, and we incorporate $N_2 = 200$ measurement data points from experimental or numerical simulations.

The model is implemented in  \href{https://colab.google/}{Google Colaboratory} to execute the code for training the PINNs and predicting the unknown parameters, and the general framework of solving the inverse problems is depicted in Algorithm~\ref{alg:pinn}.

\begin{algorithm}
\caption{\normalfont PINNs-based Inference of Unknown Parameters in PDEs}
\label{alg:pinn}
\begin{spacing}{1.2}
\begin{enumerate}
    \item Assign a proper initial guess to the unknown parameters and initialize the neural network $u_{\boldsymbol{\theta}}$.
    
    \item Extract a subset of data obtained from numerical simulations/real observations.
    
    \item Generate collocation points $(\boldsymbol{x},t)$ in the domain $\Omega \times (0,t)$, including initial, boundary, and interior points ($\boldsymbol{x} \in \mathbb{R}^n$).
    
    \item Compute the PINNs approximation $u = u_{\boldsymbol{\theta}}(\boldsymbol{x},t)$ and the required derivatives.
    
    \item Calculate the PDE residual
    \[
        \mathcal{R}(\boldsymbol{x}, t) = \partial_t u(\boldsymbol{x}, t) - \mathcal{N}[u(\boldsymbol{x}, t)]
    \]
    \item Calculate residuals for initial condition, boundary condition and evaluate data loss, then forming the total mean-squared error (MSE) loss by choosing appropriate values of weights $w_1,w_2,w_3,w_4$:
    \[
        \mathcal{L_{\text{total}}} = w_1\mathcal{L}_{\text{PDE}} + w_2\mathcal{L}_{\text{IC}} + w_3\mathcal{L}_{\text{BC}} + w_4\mathcal{L}_{\text{data}}.
    \]
    
    \item Update the values of unknown parameters and $\mathbf{\boldsymbol{\theta}}$ using RAdam to minimize $\mathcal{L_{\text{total}}}$.
    
    \item Repeat Steps 2–7 until convergence.
\end{enumerate}
\end{spacing}

Here, the loss components $\mathcal{L}_{\text{PDE}}$, $\mathcal{L}_{\text{IC}}$, $\mathcal{L}_{\text{BC}}$, and $\mathcal{L}_{\text{data}}$ enforce the PDE loss, initial condition loss, boundary condition loss, and data loss, respectively. $\boldsymbol{\theta}$ refers to the parameters within the neural network.
\end{algorithm}
\section{Analysis}
\label{Analysis}
\subsection{Classical theory of porous medium equations and universal approximation}
Here our porous medium equation, with the Dirichlet boundary condition in a bounded smooth domain $\Omega\subset\mathbb{R}^n$ with finite boundary, read as follows

\begin{equation}\label{pme}
\begin{cases}
\rho_t = \Delta \rho^{m} + \mathbf{g}(x, t) \rho \quad \mbox{in } \Omega\times (0,T)\\
\rho_0(x,0)=\rho_0(x)\quad \mbox{in } \Omega,\\
\rho(x,t)=f\quad \mbox{in } \partial\Omega\times [0,T)
\end{cases}
\end{equation}
From the classical theory of porous medium equations, one may notice that such a system may not have any classical solution even for $\rho_0\geq 0$ and smooth (see, for example Chapter 5.3 of \cite{vazquez2007porous}). However, with additional assumptions, we have the following theorems for the existence of classical solution for \eqref{pme} (\cite{vazquez2007porous}):

\begin{theorem}\label{UAT}
Suppose both $\rho_0$ and $f$ are smooth and positive, $\mathbf{g}\in C^{\infty}$ and bounded above, then \eqref{pme} admits a unique classical solution $\rho\in C^{2}(\overline{\Omega})\cap C^1([0,T])$. Moreover, we have the following comparison principle: Suppose $0<\epsilon<\rho_0<\frac{1}{\epsilon}$, then there exists $\epsilon_0>0$ which depends on $\epsilon$, $\mathbf{g}$, $T$, $f$, such that for any $t\in [0,T]$, we have $\epsilon_0<\rho(t)<\frac{1}{\epsilon_0}$.
\end{theorem}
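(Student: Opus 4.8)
The plan is to exploit the fact that, although the porous medium operator $\Delta\rho^{m}=\nabla\cdot(m\rho^{m-1}\nabla\rho)$ degenerates where $\rho=0$, the strict positivity of the data will force the solution to stay bounded away from both $0$ and $\infty$, turning \eqref{pme} into a \emph{uniformly} parabolic quasilinear problem on the relevant range. I would therefore establish the confinement (comparison) estimate first, and then deduce existence, uniqueness and regularity from it by classical parabolic theory.

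First I would construct spatially homogeneous barriers. Let $M:=\sup\mathbf{g}$ (finite by hypothesis), let $K$ be a constant with $K+\mathbf{g}\ge 0$ (available since $\mathbf{g}$ is continuous on the compact set $\overline{\Omega}\times[0,T]$, hence also bounded below), and set $a:=\min(\epsilon,\inf f)$ and $b:=\max(1/\epsilon,\sup f)$. Define $\underline{\rho}(t)=a e^{-Kt}$ and $\bar{\rho}(t)=b e^{Mt}$. Because these are constant in $x$, one has $\Delta\underline{\rho}^{m}=\Delta\bar{\rho}^{m}=0$, so a direct computation gives $\underline{\rho}_t-\Delta\underline{\rho}^{m}-\mathbf{g}\underline{\rho}=-(K+\mathbf{g})\underline{\rho}\le 0$ and $\bar{\rho}_t-\Delta\bar{\rho}^{m}-\mathbf{g}\bar{\rho}=(M-\mathbf{g})\bar{\rho}\ge 0$; moreover $\underline{\rho}\le\rho_0\le\bar{\rho}$ at $t=0$ and $\underline{\rho}\le f\le\bar{\rho}$ on $\partial\Omega\times[0,T]$. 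Invoking the comparison principle for sub- and supersolutions of the porous medium equation with Lipschitz reaction \cite{vazquez2007porous}, which holds irrespective of degeneracy, yields $a e^{-Kt}\le\rho(x,t)\le b e^{Mt}$. Setting $\epsilon_0:=\min\bigl(a e^{-KT},\,b^{-1}e^{-MT}\bigr)$ then gives the asserted two-sided bound $\epsilon_0<\rho<1/\epsilon_0$, with the stated dependence on $\epsilon,\mathbf{g},T,f$.

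With the confinement $\epsilon_0\le\rho\le 1/\epsilon_0$ secured, the diffusivity $m\rho^{m-1}$ is bounded between positive constants on this invariant region, so the problem is no longer degenerate. To produce a genuine solution I would freeze this a priori information into the equation: replace $\Phi(s)=s^{m}$ by a modified nonlinearity $\tilde\Phi$ agreeing with $s^{m}$ on $[\epsilon_0/2,\,2/\epsilon_0]$ and extended affinely with a fixed positive slope outside, so that $\tilde\Phi'$ is bounded above and below away from zero. The resulting problem $\rho_t=\nabla\cdot(\tilde\Phi'(\rho)\nabla\rho)+\mathbf{g}\rho$ is uniformly parabolic and quasilinear with smooth coefficients, so the classical Ladyzhenskaya--Solonnikov--Ural'tseva theory provides a unique classical solution (assuming the usual compatibility of $\rho_0$ and $f$ at the corner $\partial\Omega\times\{0\}$, which the smoothness hypotheses supply). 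Since the barriers above are constant in $x$, they remain sub/supersolutions of the modified equation, so its solution again satisfies $\epsilon_0\le\rho\le 1/\epsilon_0$; on that range $\tilde\Phi=\Phi$, hence $\rho$ solves the original \eqref{pme}. Parabolic Schauder bootstrapping then upgrades $\rho$ to the claimed regularity $\rho\in C^{2}(\overline{\Omega})\cap C^{1}([0,T])$.

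For uniqueness I would note that any classical solution of \eqref{pme} with the given data is itself confined to $[\epsilon_0,1/\epsilon_0]$ by the same barrier comparison, hence solves the uniformly parabolic modified equation; uniqueness there (via the linear parabolic maximum principle applied to the difference, or equivalently by applying the comparison principle in both directions) forces two such solutions to coincide. I expect the main obstacle to be precisely the degeneracy: the whole argument hinges on first extracting the strictly positive \emph{lower} bound, since it is this bound -- and not mere nonnegativity -- that restores uniform parabolicity and unlocks the classical existence, regularity, and uniqueness machinery. Care must also be taken that the reasoning is non-circular, which is why the a priori bound is obtained from the degeneracy-insensitive comparison principle for weak sub/supersolutions rather than from parabolic regularity that is not yet available.
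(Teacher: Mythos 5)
The paper does not actually prove this theorem: it is quoted as a known result from the classical theory of the porous medium equation, with a pointer to Chapter~5 of \cite{vazquez2007porous}, so there is no in-paper argument to compare yours against. Your sketch is essentially the standard proof underlying that citation: spatially constant exponential barriers give the two-sided confinement $\epsilon_0<\rho<1/\epsilon_0$; on that range the diffusivity $m\rho^{m-1}$ is bounded between positive constants, so one truncates the nonlinearity, solves the resulting uniformly parabolic quasilinear problem by Ladyzhenskaya--Solonnikov--Ural'tseva theory plus Schauder bootstrapping, and verifies a posteriori (via the same barriers) that the truncation is never active, so the constructed function solves the original degenerate problem; uniqueness follows because any classical solution is likewise confined. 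The logic is sound and, as you note, non-circular. Three small points to tighten: (i) take $K\geq\max\{0,-\inf\mathbf{g}\}$ so that the lower barrier $ae^{-Kt}$ is nonincreasing and therefore remains below $f$ on $\partial\Omega\times[0,T]$ for all $t$ (with $K<0$ it would grow and the boundary ordering could fail); (ii) classical regularity up to the parabolic corner $\partial\Omega\times\{0\}$ genuinely requires the compatibility condition $\rho_0|_{\partial\Omega}=f(\cdot,0)$ (and its first-order analogue), which is \emph{not} implied by smoothness of $\rho_0$ and $f$ separately --- this hypothesis is also missing from the theorem as stated in the paper, so your claim that the smoothness hypotheses supply it is the one inaccurate sentence; (iii) to obtain the strict inequalities asserted in the statement, choose $\epsilon_0$ strictly smaller than $\min\bigl(ae^{-KT},\,b^{-1}e^{-MT}\bigr)$.
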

Hence, for such classical solution, we can use our neural network solution to approximate it, using the classical Universal Approximation Theorem (UAT) \cite{UAT}. 
\begin{theorem}
Suppose that $\rho\in  C^{2}(\overline{\Omega})\cap C^1([0,T])$ with a smooth domain $\Omega$. Let $\sigma$ be any non-polynomial function in $C^1(\mathbb{R})$, then for any $\delta>0$, we have a two-layer neural network as in \eqref{eq: PINNs}
$$
\rho^{NN}(t,x,y)=u_{\boldsymbol{\theta}}(t, x, y),
$$
such that
$$
\|\rho-\rho^{NN}\|_{C^{2}(\overline{\Omega})\cap C^1([0,T])}<\delta.
$$
\end{theorem}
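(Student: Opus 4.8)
The plan is to reduce the statement to the derivative-form of the Universal Approximation Theorem. First I would set $K := \overline{\Omega}\times[0,T]\subset\mathbb{R}^{3}$, which is compact because $\Omega$ is a bounded smooth domain with finite boundary, and observe that the combined norm $\|\cdot\|_{C^{2}(\overline{\Omega})\cap C^{1}([0,T])}$ is comparable to the sum of the sup-norms over $K$ of $\rho$ together with its partial derivatives $\partial_{x}\rho,\partial_{y}\rho,\partial_{t}\rho,\partial_{xx}\rho,\partial_{yy}\rho,\partial_{xy}\rho$ — that is, all derivatives whose spatial order is at most $2$ and whose temporal order is at most $1$. By Theorem~\ref{UAT} the target $\rho$ is genuinely of class $C^{2}(\overline{\Omega})\cap C^{1}([0,T])$, so each of these derivatives is well defined and continuous up to the boundary. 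It therefore suffices to produce a single network $u_{\boldsymbol{\theta}}$ whose output and each listed derivative are uniformly close, on $K$, to the corresponding derivative of $\rho$.

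The key step is to invoke the strengthened version of the classical UAT \cite{UAT} — in the form due to Hornik--Stinchcombe--White and to Pinkus — which asserts that if $\sigma$ is non-polynomial and sufficiently smooth, then the family of single-hidden-layer networks $\sum_{i} c_{i}\,\sigma(w_{i}\cdot z + b_{i})$ is dense in $C^{k}(K)$ in the topology of simultaneous uniform approximation of all partial derivatives up to order $k$. Applying this with $k=2$ and $z=(t,x,y)$ produces, for any prescribed tolerance, coefficients $(c_{i},w_{i},b_{i})$ so that the resulting two-layer network as in \eqref{eq: PINNs} satisfies $\|\partial^{\alpha}(\rho-\rho^{NN})\|_{L^{\infty}(K)}<\delta'$ simultaneously for every multi-index $\alpha$ with $|\alpha|\le 2$. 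Choosing $\delta'$ equal to $\delta$ divided by the (finite) number of derivative terms appearing in the norm and summing these finitely many estimates yields $\|\rho-\rho^{NN}\|_{C^{2}(\overline{\Omega})\cap C^{1}([0,T])}<\delta$, as required. The mixed nature of the norm (order two in space but only order one in time) introduces no difficulty, since it merely omits some of the order-two derivatives already controlled by the $k=2$ approximation.

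The main obstacle — and the point that needs care — is the regularity demanded of the activation: to control $\rho^{NN}$ in the $C^{2}$ norm one must differentiate the network twice, so the density result genuinely requires $\sigma\in C^{2}(\mathbb{R})$ rather than merely $\sigma\in C^{1}(\mathbb{R})$. I would therefore strengthen the stated hypothesis to $\sigma\in C^{2}(\mathbb{R})$ non-polynomial (as satisfied by $\tanh$, smooth sigmoids, and softplus), under which Pinkus's theorem applies directly. The non-polynomiality of $\sigma$ is exactly what guarantees that the span of the shifted-scaled copies $\{\sigma(w\cdot z+b)\}$ is rich enough to approximate derivatives and cannot be dropped; this is where the real content of the argument lies, whereas the passage from single-derivative to multi-derivative control and the handling of the compact space--time domain are routine once the correct density theorem is in hand.
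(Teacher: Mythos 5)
Your proposal is correct and takes the same route the paper intends: the paper states this theorem without proof, presenting it as a direct consequence of the derivative-form Universal Approximation Theorem of \cite{UAT}, which is exactly the density result (simultaneous uniform approximation of all partials up to order $k$ on the compact set $\overline{\Omega}\times[0,T]$) that you invoke and then sum over the finitely many derivative terms. Your observation that controlling second derivatives of the network forces $\sigma\in C^{2}(\mathbb{R})$ rather than the stated $\sigma\in C^{1}(\mathbb{R})$ is a genuine and worthwhile correction to the hypothesis that the paper glosses over; it is harmless in practice since the experiments use $\tanh$.
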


\subsection{Loss function convergence}
In this section, we show that there exists a sequence of neural network solution to \eqref{pme}, such that if classical solution exists, then total loss function converges to zero. Compare with \eqref{eq: PDE loss}, \eqref{eq: IC loss},\eqref{eq: BC loss}, we define the continuous loss functions as follows:

\begin{equation}
\begin{aligned}
\tilde{\mathcal{L}}_{\text{PDE}} = \|\partial_t \rho^{NN}_j-\triangle (\rho^{NN}_j)^m-\mathbf{g}\rho^{NN}_j\|_{L^2(\Omega\times [0,T])}^2,
\end{aligned}
\label{eq: cPDE loss}
\end{equation}

\begin{equation}
\begin{aligned}
\tilde{\mathcal{L}}_{\text{IC}} =\|\rho_j^{NN}-\rho\|^2_{L^2(\Omega)},
\end{aligned}
\label{eq: cIC loss}
\end{equation}

\begin{equation}
\begin{aligned}
\tilde{\mathcal{L}}_{\text{BC}} = \|(\rho_j^{NN})^m-\rho^m\|^2_{L^2(\partial\Omega\times [0,T])},
\end{aligned}
\label{eq: cBC loss}
\end{equation}
\begin{remark}
One may notice that \eqref{eq: cBC loss} is slightly different from the discrete version boundary loss \eqref{eq: BC loss}, by a power $m$. This is due to the technicality of the analysis proof. In practice, based on the numerical experiment, the error of the final output is negligible.
\end{remark}
\begin{theorem}\label{loss}
Assume $|\mathbf{g}|\leq B$. Consider the solution $\rho\in C^{2}(\overline{\Omega})\cap C^1([0,T]) $to \eqref{pme}, with $|\rho|\leq C_0$ for some constant $C_0$ depending on $m$, $T$, $\mathbf{g}$, $\Omega$, $f$ and $\rho_0$. For given smooth non-polynomial $C^1$ activation function $\sigma$, there exists a sequence of neural network parameters $\{j\}$, such that the corresponding loss function $\tilde{\mathcal{L}}_{loss}\coloneqq \tilde{\mathcal{L}}_{PDE}+\tilde{\mathcal{L}}_{BC}+\tilde{\mathcal{L}}_{IC}\rightarrow 0$ as $j\rightarrow \infty$.
\end{theorem}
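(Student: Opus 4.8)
The plan is to invoke the Universal Approximation Theorem (the theorem immediately preceding this statement) to build a sequence of networks converging to the true solution $\rho$ in the strong $C^2(\overline{\Omega})\cap C^1([0,T])$ norm, and then to control each of the three loss terms by powers of this approximation error. First I would fix, for each $j$, a network $\rho_j^{NN}$ with $\|\rho-\rho_j^{NN}\|_{C^2(\overline{\Omega})\cap C^1([0,T])}<\delta_j$ where $\delta_j:=1/j\to0$; such a network exists since $\rho\in C^2(\overline{\Omega})\cap C^1([0,T])$ and $\sigma$ is a non-polynomial $C^1$ function. Because $\overline{\Omega}\times[0,T]$ is compact and $\rho\in C^2$, the quantities $\rho$, $\nabla\rho$, $\Delta\rho$ are uniformly bounded, and the $C^2$-closeness then yields uniform bounds on $\rho_j^{NN}$, $\nabla\rho_j^{NN}$, $\Delta\rho_j^{NN}$ that are independent of $j$ for $j$ large; in particular $\|\rho_j^{NN}\|_{C^0}\le C_0+1=:M$.

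The initial-condition term is immediate: since $\|\rho_j^{NN}-\rho\|_{C^0}<\delta_j$, integrating the pointwise error over $\Omega$ gives $\tilde{\mathcal{L}}_{IC}\le|\Omega|\,\delta_j^2\to0$. For the boundary term I would use the factorization $a^m-b^m=(a-b)\sum_{k=0}^{m-1}a^k b^{m-1-k}$; with $a=\rho_j^{NN}$ and $b=\rho$ both bounded by $M$ (for $m\ge2$ an integer these are polynomials, so no positivity of $\rho$ is needed), this yields the pointwise bound $|(\rho_j^{NN})^m-\rho^m|\le mM^{m-1}\delta_j$, whence $\tilde{\mathcal{L}}_{BC}\le|\partial\Omega|\,T\,(mM^{m-1})^2\,\delta_j^2\to0$.

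The PDE term carries the main work. Since $\rho$ solves the equation exactly, the residual equals a difference of residuals, so I would write
\[
\partial_t\rho_j^{NN}-\Delta(\rho_j^{NN})^m-\mathbf{g}\rho_j^{NN}
=(\partial_t\rho_j^{NN}-\partial_t\rho)-\bigl(\Delta(\rho_j^{NN})^m-\Delta\rho^m\bigr)-\mathbf{g}(\rho_j^{NN}-\rho).
\]
The first and third pieces are bounded in $C^0$ by $\delta_j$ and $B\delta_j$ respectively. For the middle piece I would expand $\Delta(u^m)=m(m-1)u^{m-2}|\nabla u|^2+mu^{m-1}\Delta u$ for $u=\rho_j^{NN}$ and for $u=\rho$, then estimate the difference term by term. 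Each difference, such as $u^{m-2}|\nabla u|^2-v^{m-2}|\nabla v|^2$ or $u^{m-1}\Delta u-v^{m-1}\Delta v$, splits telescopically into a factor that is small (of size $C\delta_j$ by the $C^2$-closeness of $\rho_j^{NN}$ to $\rho$) times a factor that is uniformly bounded, giving $\|\Delta(\rho_j^{NN})^m-\Delta\rho^m\|_{C^0}\le C\delta_j$ with $C$ depending only on $m$, $M$, and the $C^2$ norm of $\rho$. Combining, the residual is $O(\delta_j)$ uniformly, so $\tilde{\mathcal{L}}_{PDE}\le|\Omega|\,T\,(C'\delta_j)^2\to0$, and therefore $\tilde{\mathcal{L}}_{loss}\to0$.

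I expect the principal obstacle to be the nonlinear diffusion term $\Delta(\rho^m)$: its expansion produces products involving $u$, $|\nabla u|^2$, and $\Delta u$, so controlling the residual genuinely requires uniform control of the second spatial derivatives rather than mere $L^2$ or $C^0$ closeness. This is exactly why the approximation must be taken in the $C^2(\overline{\Omega})\cap C^1([0,T])$ norm provided by the UAT; once the uniform bounds are secured, the telescoping estimates are routine.
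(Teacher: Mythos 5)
Your proposal is correct and follows essentially the same route as the paper: invoke the $C^2(\overline{\Omega})\cap C^1([0,T])$ Universal Approximation Theorem with $\delta_j=1/j$, use the fact that $\rho$ solves the equation to rewrite the PDE residual as a difference of residuals split into three pieces, expand $\Delta(u^m)$ and telescope the nonlinear terms using uniform bounds on $\rho_j^{NN}$, and bound the IC and BC terms directly. The only cosmetic difference is that you control $|a^m-b^m|$ via the standard factorization $(a-b)\sum_k a^k b^{m-1-k}$ while the paper uses an equivalent binomial-expansion inequality; the substance is identical.
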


\begin{proof}
Based on Theorem \ref{UAT}, we can pick $\delta_j=1/j$, such that for the corresponding neural network functions $\rho^{NN}_{j}$, we have
$$
\|\rho-\rho^{NN}_j\|_{C^{2}(\overline{\Omega})\cap C^1([0,T])}<\delta_j.
$$
By definition, as $\rho$ is the solution to \eqref{pme} 
\begin{align*}
\begin{split}
\sqrt{\tilde{\mathcal{L}}_{PDE}}&=\|\partial_t \rho^{NN}_j-\triangle (\rho^{NN}_j)^m-\mathbf{g}\rho^{NN}_j\|_{L^2(\Omega\times [0,T])}\\
&=\|\partial_t (\rho^{NN}_j-\rho)-\triangle \left((\rho^{NN}_j)^m-\rho^m\right)-\mathbf{g}(\rho^{NN}_j-\rho)\|_{L^2(\Omega\times [0,T])}\\
&\leq \|\partial_t (\rho^{NN}_j-\rho)\|_{L^2}+\|\triangle \left((\rho^{NN}_j)^m-\rho^m\right)\|_{L^2}+\|\mathbf{g}(\rho^{NN}_j-\rho)\|_{L^2}\\
&\leq I+II+III.
\end{split}
\end{align*}
One can easily get
$$
I\leq (T\Omega)^{1/2}\|\rho-\rho^{NN}_j\|_{C^2C^1}<(T\Omega)^{1/2}\delta_j,
$$
$$
III\leq |\mathbf{g}|_{L^{\infty}}(T\Omega)^{1/2}\|\rho-\rho^{NN}_j\|_{C^2C^1}\leq B(T\Omega)^{1/2}\delta_j.
$$
For $II$, we need the following elementary inequality, which can be proved by binomial expansion and easy calculation:
\begin{equation}\label{mpower}
|a^k-b^k|\leq |a-b|\left(|a|+\max\{1,|a-b|\}\right)^k.
\end{equation}
Hence,
\begin{align*}
\begin{split}
II&\leq \|\triangle \left((\rho^{NN}_j)^m-\rho^m\right)\|_{L^2}\\
&\leq m(m-1)\|(\rho^{NN}_j)^{m-2}|\nabla\rho^{NN}_j|^2-\rho^{m-2}|\nabla\rho|^2\|_{L^2}\\
&+m\|(\rho^{NN}_j)^{m-1}\triangle \rho^{NN}_j-\rho^{m-1}\triangle \rho\|_{L^2}\\
&\leq m(m-1)\|(\rho^{NN}_j)^{m-2}|\nabla\rho^{NN}_j-\nabla\rho|^2\|_{L^2}\\
&+2m(m-1)\|(\rho^{NN}_j)^{m-2}|\nabla\rho^{NN}_j-\nabla\rho||\nabla\rho|\|_{L^2}\\
&+m(m-1)\|\left((\rho^{NN}_j)^{m-2}-\rho^{m-2}\right)|\nabla\rho|^2\|_{L^2}\\
&+m\|(\rho^{NN}_j)^{m-1}(\triangle \rho^{NN}_j-\rho)\|_{L^2}+\|\left(\rho^{NN}_j)^{m-1}-\rho^{m-1}\right)\triangle \rho\|_{L^2}
\end{split}
\end{align*}
By using \eqref{mpower}, we have
\begin{align*}
\begin{split}
II&\leq C(m,\Omega, T)\|\rho-\rho^{NN}_j\|_{C^2C^1}\|\rho_j^{NN}\|_{L^{\infty}}^{m-2}(\max\{1,\|\rho-\rho^{NN}_j\|_{C^2C^1}\}+\|\rho\|_{C^2C^1})\\
&+C(m,\Omega, T)\|\rho-\rho^{NN}_j\|_{C^2C^1}(\max\{1,\|\rho-\rho^{NN}_j\|_{C^2C^1}\}+\|\rho\|_{C^2C^1})^m.\\
\end{split}
\end{align*}
Since $\|\rho_j^{NN}\|_{L^{\infty}}\leq \|\rho\|_{L^{\infty}}+\|\rho-\rho^{NN}_j\|_{C^2C^1}\leq C_0+\delta_j$, we have
$$
II\leq C(m,\Omega, T, C_0) \delta_j.
$$
As a consequence, we have
\begin{equation}\label{LPDE}
\tilde{\mathcal{L}}_{PDE}\leq C(m,\Omega, T, C_0) \delta_j^2.
\end{equation}
In addition, we also have
\begin{equation}\label{LBC}
\tilde{\mathcal{L}}_{BC}=\|(\rho_j^{NN})^m-\rho^m\|^2_{L^2(\partial\Omega\times [0,T])}\leq (C_0+1)^m\|\rho_j^{NN}-\rho\|_{C^2C^1}|\partial\Omega|T,
\end{equation}
and
\begin{equation}\label{LIC}
\tilde{\mathcal{L}}_{IC}=\|\rho_j^{NN}-\rho\|^2_{L^2(\Omega)}\leq \|\rho_j^{NN}-\rho\|^2_{C^2C^1}|\Omega|.
\end{equation}
Combine \eqref{LPDE}, \eqref{LBC} and \eqref{LIC}, we have $\tilde{\mathcal{L}}_{loss}\rightarrow 0$ as $j\rightarrow \infty$.

\end{proof}

\subsection{PINN solution convergence}
In this section, we prove that with the parameter $\{j\}$, the neural network we found in Theorem \ref{loss} converges to the classical solution of \eqref{pme}. Actually, we have the following $L^1$ contraction theorem, which is the standard result for solutions to porous medium equations. 
\begin{theorem}
Let $\rho^{NN}$ be the neural network solution given by Theorem \ref{loss}. Assume $m$ is an odd integer, $\rho$ is the classical solution to \eqref{pme} and $|\rho|\leq C_0$, $|\mathbf{g}|_{L^{\infty}}\leq B$, then we have
\begin{equation}\label{contraction}
\|\rho^{NN}-\rho\|_{L^1(\Omega\times [0,T])}\leq  C\int_0^T \mathcal{L}(t)dt,
\end{equation}
with a constant $C$ depending on $T,m,\Omega,B$.
\end{theorem}
\begin{proof}
The proof of this based on the standard $L^1$ contraction estimate of porous medium equation, namely, if $\rho^{NN}$ satisfies the equation
\begin{equation}\label{eqNN}
    \begin{cases}
\rho^{NN}_t = \Delta (\rho^{NN})^{m} + \mathbf{g}(x, t) \rho^{NN}+er(x,t) \quad \mbox{in } \Omega\times (0,T)\\
\rho_0(x,0)=\rho^{NN}_0(x)\quad \mbox{in } \Omega,\\
\rho(x,t)=f^{NN}(x,t)\quad \mbox{in } \partial\Omega\times [0,T)
\end{cases}
\end{equation}
then we have the following:
\begin{equation}\label{L1contraction}
\begin{split}
  \int_{\Omega\times [0,T]}(\rho-\rho^{NN})^+dxdt\leq &C\int_{\Omega}(\rho_0-\rho_0^{NN})^+dx+C\int_{\partial\Omega\times [0,T]} \left(\rho^m-(\rho^{NN})^m\right)^+dxdt\\
  &+C\int_{\Omega\times [0,T]}er(x,t)^+dxdt.
  \end{split}
\end{equation}
Here $h(x,t)^+=\max\{h(x,t),0\}$, $C$ is a constant that depending on $B,T,m,\Omega$. Due to the fact that classical solution is also the entropy weak (sub- or super-)solution, one can follow step by step in \cite{kobayasi2006kinetic}, Theorem $1.1$, to get such a $L^1$ contraction. Then, to get \eqref{contraction} one can simply use the Cauchy's inequality and the fact that $\Omega$ is finite. 
\end{proof}

\subsection{Remarks regarding the theoretical assumptions}\label{subsec:rmk}
The convergence theorems in this section assume smoothness and strict positivity of the initial and boundary data, as well as boundedness of the source term $\mathbf{g}$. By classical porous medium theory \cite{vazquez2007porous},  a globally defined classical solution exists under these conditions; otherwise the equation degenerates where the density vanishes, and the problem is not classically well‑posed. However, the initial condition in this paper, like \eqref{eq: initial}, is a discontinuous characteristic function and the boundary condition is homogeneous Dirichlet, both violate the positivity requirement. Consequently, the convergence theorems do not directly apply to any of the numerical experiments, whether synthetic or real.

The analysis therefore serves a different purpose: it confirms that the PINN loss functional is internally consistent for the porous medium operator in an idealized smooth setting, and it provides a theoretical benchmark that guarantees the optimization problem is well-posed when a classical solution exists. The numerical results presented later in this paper demonstrate that the method performs well even when the theoretical assumptions are violated, but this remains an empirical observation. Extending the convergence theory to weak solutions and noisy data would provide a more rigorous analytical foundation, however, such an extension is beyond the scope of this paper, thus is left for future work.

\section{Setup of PINNs and verification on sythetic data}
\label{Preliminary}

\subsection{Recovering the proliferation rate from numerical tumor growth data}
We first try to investigate whether this PINNs framework can accurately predict parameters in the case of tumor growth models governed by numerical solutions. Specifically, we select a range of values for parameter $v$. For each value, we generate the synthetic tumor density data by numerically solving the underlying PDE model. This numerical solver is adapted from the MATLAB code developed by Xu'an Dou, which implements the front-capturing scheme proposed by Liu et al. (2018) in \cite{liu2018accurate} for tumor growth models. The generated data are treated as ground truth data and fed into the PINNs framework to train the model and recover the corresponding $v$ values. As we expect, the predicted $v$ values are consistently close to the original ones used for data generation, indicating that the model can accurately identify parameters from numerically simulated tumor growth patterns.
\subsubsection{PINNs setup}
\label{setup}
To validate the effectiveness of our PINNs framework on numerical solutions of tumor growth, we implement a custom training pipeline using PyTorch as mentioned in Algorithm~\ref{alg:pinn}. The PINNs model is designed to infer the proliferation parameter $v$ from spatiotemporal tumor density data generated by a numerical PDE solver.
The neural network consists of a fully connected multilayer perceptron (MLP) with three input nodes corresponding to spatial and temporal coordinates $(t,x,y)$, and one output node representing the predicted tumor density $u(x,y,t)$.

\textbf{Networks architecture.} We choose the neural network architecture with a depth of 3 hidden layers and each layer having a width of 64 neurons. The hyperbolic tangent function (Tanh) is chosen as our activation function. The absolute value is added to the final output to ensure biologically realistic predictions, since the tumor density should be nonnegative.

\textbf{Training settings.} The network is trained using the optimization algorithm RAdam and Xavier initialization. The learning rate is first set as $10^{-3}$, together with a StepLR scheduler that reduces the learning rate by a factor of 0.9 every 1000 epochs.

\textbf{Optimization.} Within the framework of inverse problems, the neural network architecture remains consistent with that used for forward problems. The fundamental distinction lies in the incorporation of unknown physical parameters of interest as supplementary learnable parameters within the optimization process \cite{liu2025asymptotic}. In our specific case, the proliferation rate $v$ is treated as a trainable parameter alongside the network weights and biases. We therefore aim to optimize both the network parameters $\boldsymbol{\theta}$ and the physical parameter $v$ simultaneously through the following minimization problem:

\[
(\boldsymbol{\theta^*}, v^*) = \underset{\boldsymbol{\theta}, v}{\text{argmin}} \, \mathcal{L}_{\text{total}}(\boldsymbol{\theta}, v),
\]
where $\boldsymbol{\theta^*}$ and $v^*$ refers to the optimal values of all the unknown parameters that can yield the minimized $\mathcal{L}_{\text{total}}$.

The model is trained for 60,000 epochs, and the relative error between the PINNs prediction and ground truth value is monitored to evaluate performance.
We perform this validation using several ground truth values for $v$, namely 1.7, 1.8, 1.9, 2.0, 2.1, and 2.2, each corresponding to numerically simulated tumor growth profiles. Based on hyperparameter tuning trials, the weights $w_1, w_2, w_3, w_4$ in the loss function are set as shown in Table \ref{tab:v_parameter}:

\begin{table}[htbp]
\centering

\begin{tabular}{ccccc}
\toprule
$v_{\text{true}}$ & $w_1$ & $w_2$ & $w_3$ & $w_4$ \\
\midrule
1.7 &10 & 1& 1& 50\\
1.8 &10 &1 & 1& 50\\
1.9 &10 &1 &1 & 80\\
2.0 &10 &1 &1 &100 \\
2.1 &10 &1 &1 &100 \\
2.2 &10 &1 &1 &100 \\
\bottomrule
\end{tabular}
\caption{Hyperparameter settings for different ground truth values of $v$.}
\label{tab:v_parameter}
\end{table}

\subsubsection{Results}
The results are presented in Figure~\ref{fig:prediction plot}, which confirm the reliability and robustness of our PINNs-based framework in predicting the unknown parameter $v$ from the observational data. Specifically, the predicted values of $v$ show excellent agreement with the ground truth, with a relative error around 1\% across all tested cases (Figure~\ref{fig:relative error}). By enforcing the physics constraints through the PDE residual loss and incorporating measurement data, the framework achieves a balanced fit between data consistency and physical plausibility. These findings validate the effectiveness of our approach in using PINNs to solve inverse problems in tumor growth models.

\begin{figure}[htbp]
    \centering
    \includegraphics[width=0.9\textwidth]{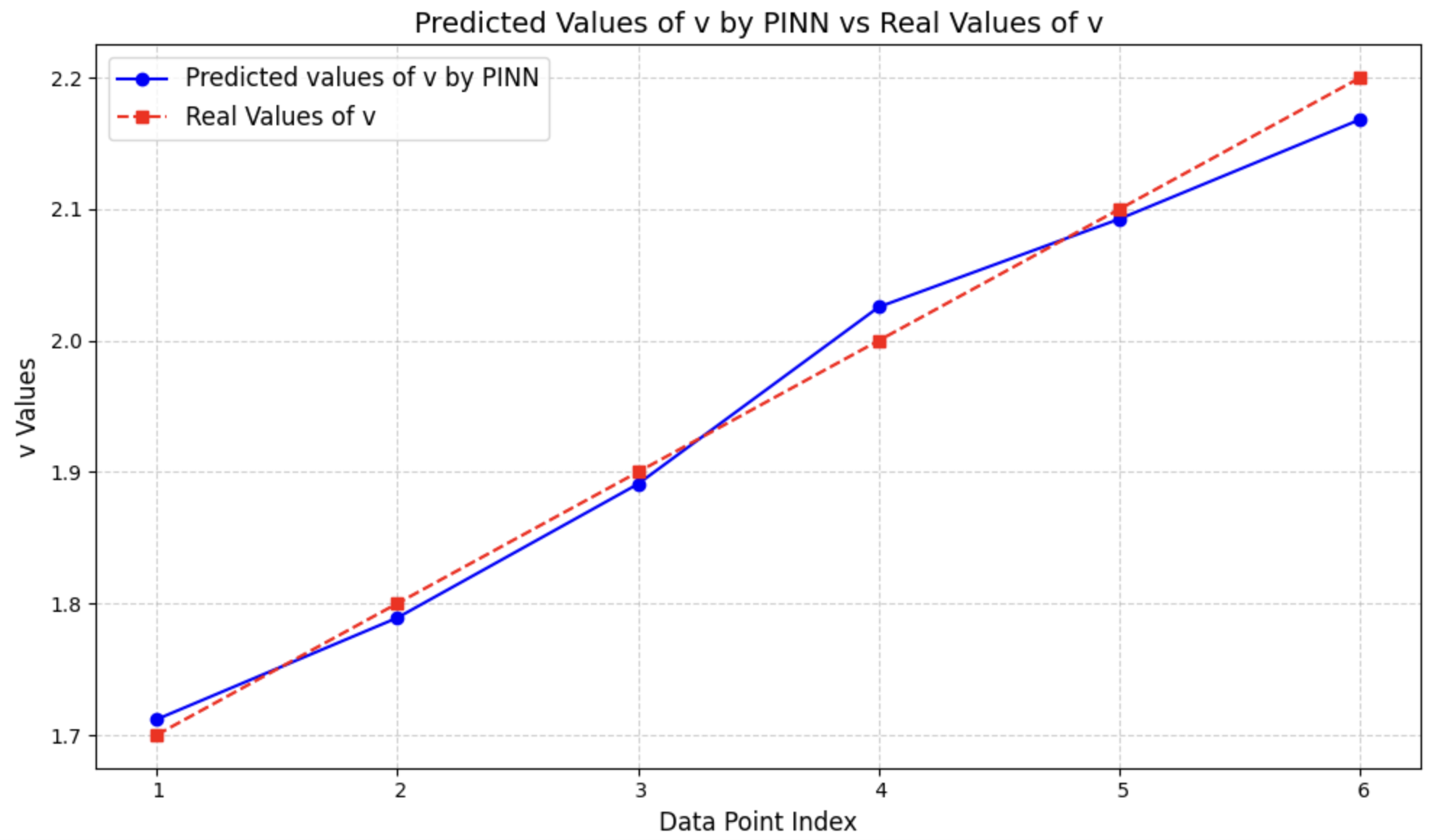}
    \caption{Predicted values of parameter $v$ by PINNs model (blue) compared with ground truth (red).}
    \label{fig:prediction plot}
\end{figure}
\begin{figure}[htbp]
    \centering
    \includegraphics[width=0.9\textwidth]{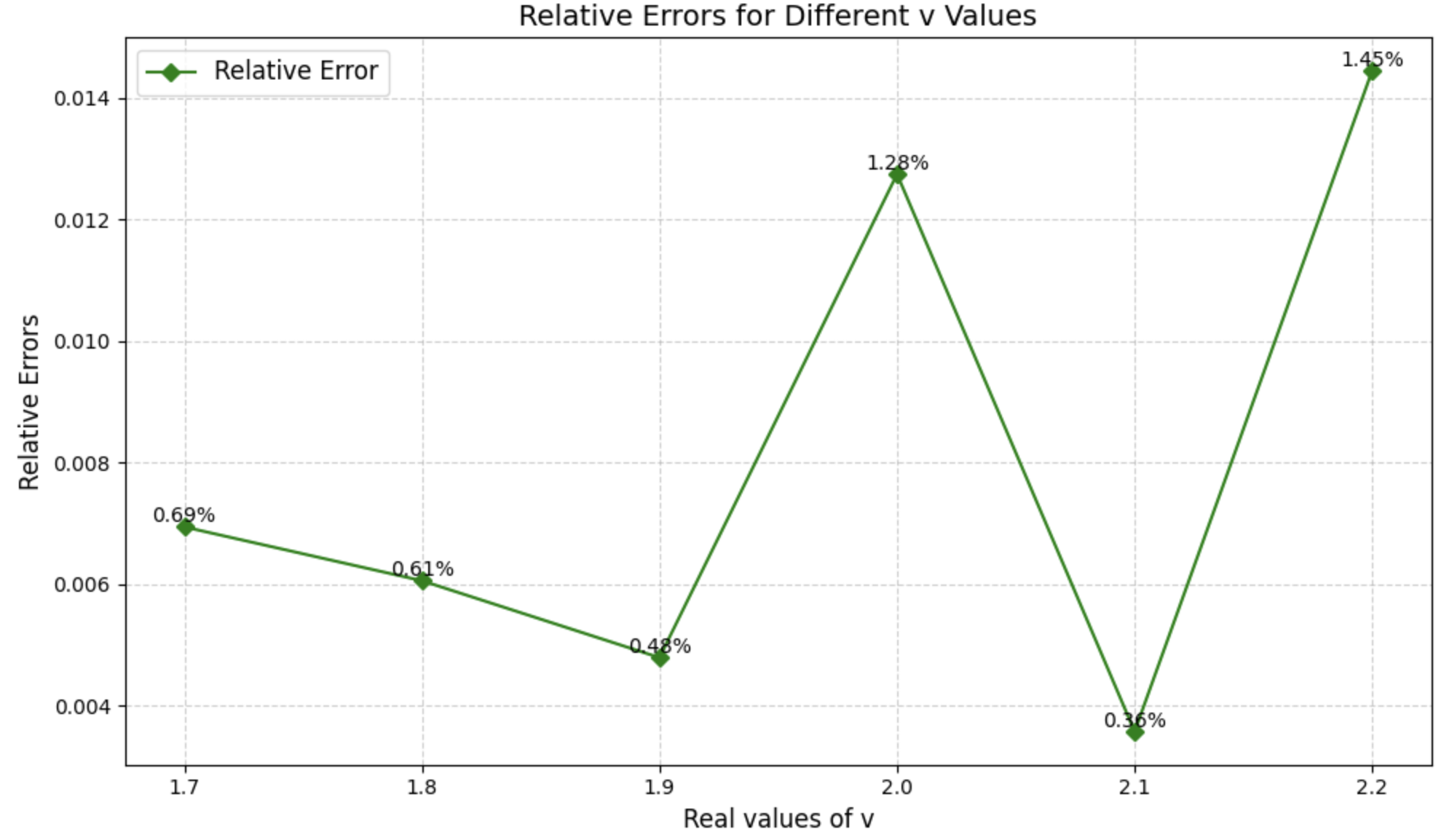}
    \caption{Relative errors for each value of $v$.}
    \label{fig:relative error}
\end{figure}

\subsection{Robustness to noisy data}
In practice, real-world data often contain noise due to experimental limitations and measurement errors. To assess the robustness of our framework, we add controlled Gaussian noise to the synthetic tumor density data:
\begin{equation}
z_{\text{noisy}} = z + \varepsilon \cdot \eta
\end{equation} 
$\varepsilon$ is the noise scaling factor, and $\eta$ follows a normal distribution:
\begin{equation}
\eta \sim \mathcal{N}(0, \sigma^2)
\end{equation}
with $\sigma$ controlling the noise standard deviation.

We examine five representative $(\varepsilon, \sigma)$ combinations to simulate different noise levels where $v_{\text{true}} = 2.1$. For each noisy dataset, the PINNs model is retrained, and we compute the relative error of the inferred $v$:
\begin{equation}
\mathcal{\varepsilon}(t) = \frac{|v_{\text{pred}} - v_{\text{true}}|}{v_{\text{true}}}
\end{equation}
and plot the error evolution in all cases.

\begin{figure}[htbp]
    \centering
    \includegraphics[width=0.9\textwidth]{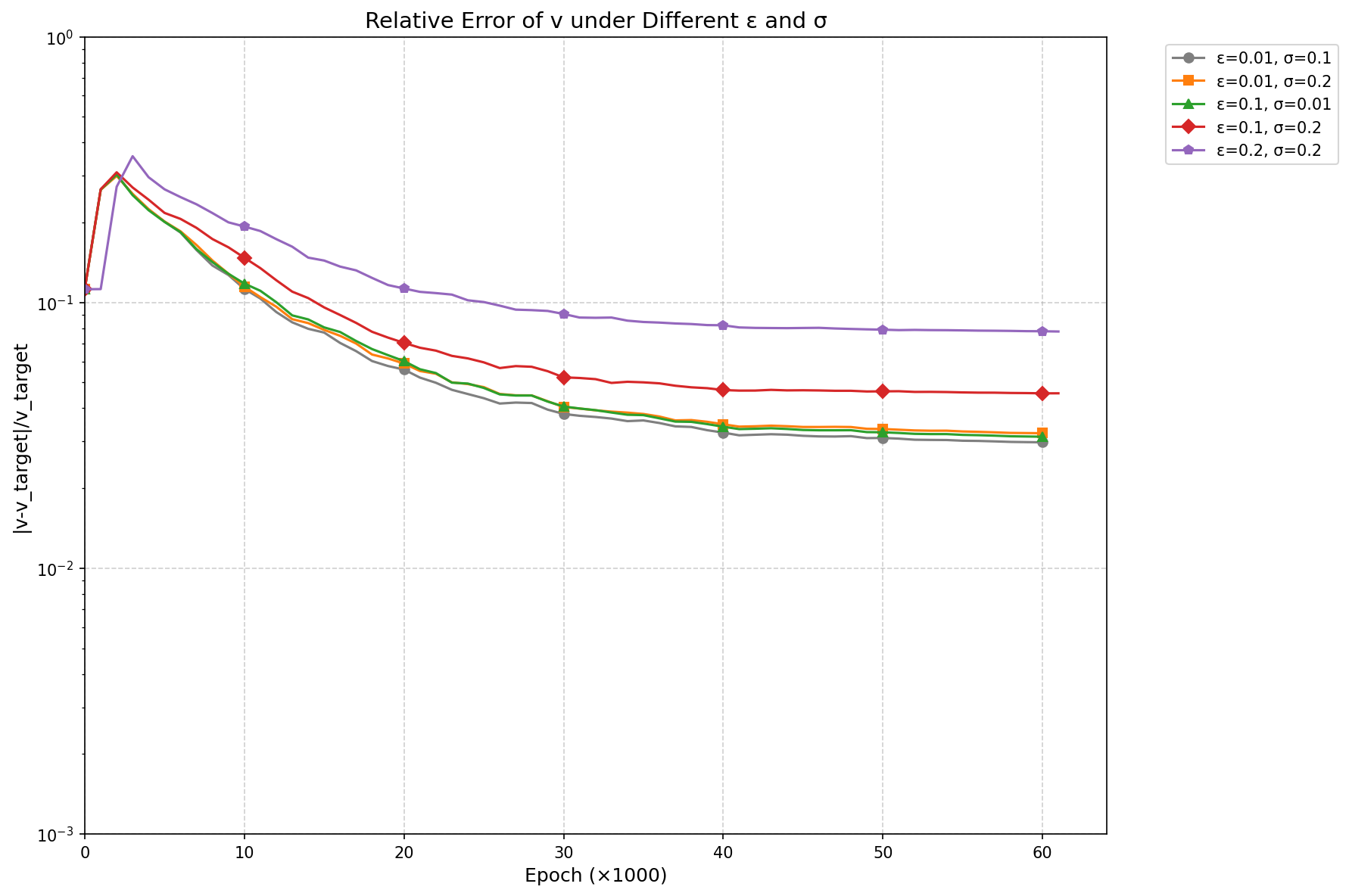}
    \caption{Relative errors between learning parameter and its target value.}
    \label{fig:error tesy}
\end{figure}

As illustrated in Figure~\ref{fig:error tesy}, the relative error decreases in all cases, showing a drop in the first 20,000 epochs followed by slower improvement. For fixed $\epsilon$, larger $\sigma$ leads to higher errors, and for fixed $\sigma$, larger $\epsilon$ also results in higher errors. Lower noise levels ($\sigma$ small) yield faster convergence and lower final errors, while high noise levels significantly slow convergence and degrade accuracy.

The prediction errors remain within acceptable bounds across all noise levels tested with less than $1\%$ error after 30000 epoch. It indicates that the real observed data may contain inherent noise and measurement uncertainties, this framework can still recover unknown parameters with reasonable accuracy. The physics-informed regularization inherent in PINNs provides a stabilizing effect that mitigates the impact of data imperfections, making the approach suitable for real-world applications where perfect, noise-free data are seldom available.

\section{Prediction of proliferation rate based on observed data}
\label{Prediction of Proliferation Rate Based on Observed Data}
\subsection{Observed data and problem setup}
\label{Observed Data and Problem Setup}
%{\color{red}lab situation-Xu}
 Having validated the capability of the proposed PINNs framework on synthetic data generated from numerical solutions, we then apply it to predict the proliferation rate $v$ based on observed tumor growth data. One can visit \cite{github1} for the details of the lab condition and dataset. The dataset records tumor growth over 18 days. Time $t$ is rescaled to the interval $[0,1]$ such that the second day corresponds to $t=0$. The spatial domain is defined as $x,y \in [-3,3]$.

The images that we can observe in lab are shown in Figure~\ref{fig:lab}, in which it is able for us to measure the radius of tumors at specific time. Assuming radial symmetry in tumor growth, we extract the tumor radius every two days based on the observed images. Due to the limitation of measurement, we cannot obtain the precise tumor density at each spatial location. Instead, we use binary labels as the real data, where regions with tumor presence are labeled as 1 and regions without are labeled as 0. Tumor necrosis at the center is neglected for simplicity. The observed data with respect to rescaled t are recorded in Table~\ref{tab:observed data}.

\begin{figure}[htbp]
    \centering
    \includegraphics[width=0.9\textwidth]{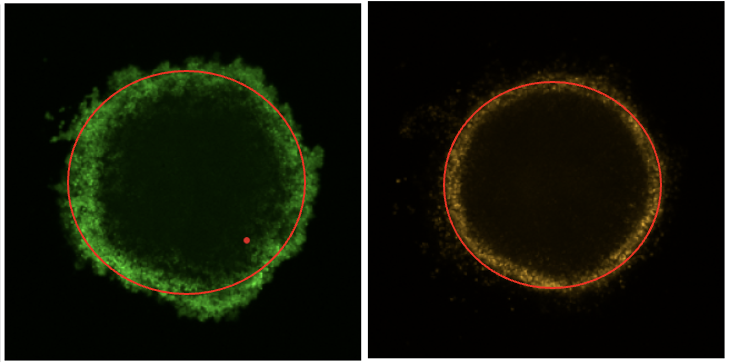}
    \caption{Observed images of tumor growth in lab, in which the tumor growth profile is marked in red circle. }
    \label{fig:lab}
\end{figure}

\begin{table}[htbp]
\centering
\begin{tabular}{|c|c|c|c|c|c|c|c|}
\hline
Time(Rescaled) & 0.25 & 0.375 & 0.5 & 0.625 & 0.75 & 0.875 & 1 \\
\hline
Tumor Radius & 0.66 & 0.97 & 1.26 & 1.48 & 1.93 & 2.13 & 2.5 \\
\hline
\end{tabular}
\caption{Real observation of tumor radius with respect to rescaled time.}
\label{tab:observed data}
\end{table}

\subsection{Implementation details}

The implementation framework is nearly identical to the algorithm described in the validation stage (see Algorithm~\ref{alg:pinn}). The similar PINNs architecture is adopted as mentioned in \ref{setup}.

The main difference lies in the formulation of the data loss. Since the observed data are binary-valued (0/1), we replace the mean-squared error (MSE) loss with the binary cross-entropy (BCE) loss:

\begin{equation}
\begin{aligned}
\mathcal{L}_{\text{data}} = \frac{1}{N_2} \sum_{i}\left[
    -y_i \log(\hat{y}_i) - (1 - y_i)\log(1 - \hat{y}_i)
\right],
\end{aligned}
\label{eq: BCE loss}
\end{equation}

where $N_2$ is the number of observed data points ($N_2 = 200$ in our study), $y_i \in \{0,1\}$ is the observed binary label and $\hat{y}_i$ is the output of neural network at the same location and time. This choice of loss function better reflects the nature of the data and enables the model to accurately identify tumor regions without requiring precise density values. Specifically, based on the hyperparameter tuning trials, the weights $w_1,w_2,w_3,w_4$ were assigned as 1, 1, 1, and 5, respectively, to emphasize more about the importance of data loss. 

Moreover, we use the first five time points (i.e., $t=0, 0.125, 0.25, 0.375, 0.5$) as training data to let PINNs learn the proliferation rate $v$, while the last two time points ($t=0.875$ and $t=1$) are used as testing data to evaluate the prediction capability of the trained model. The aim is to examine whether the inferred $v$ can accurately predict tumor growth in the future. To assess the accuracy of prediction, we compare the predicted tumor radius at $t=0.875$ and $t=1$ by PINNs with the ground truth values(i.e., $2.13, 2.5$) and compute the \emph{relative errors} between them.

\subsection{Results}
Under the framework of PINNs, the predicted value of proliferation rate $v$ with respect to interations is shown in Figure~\ref{fig:prediction_v}, suggesting the convergent value of 3.1264.

\begin{figure}[htbp]
    \centering
    \includegraphics[width=0.9\textwidth]{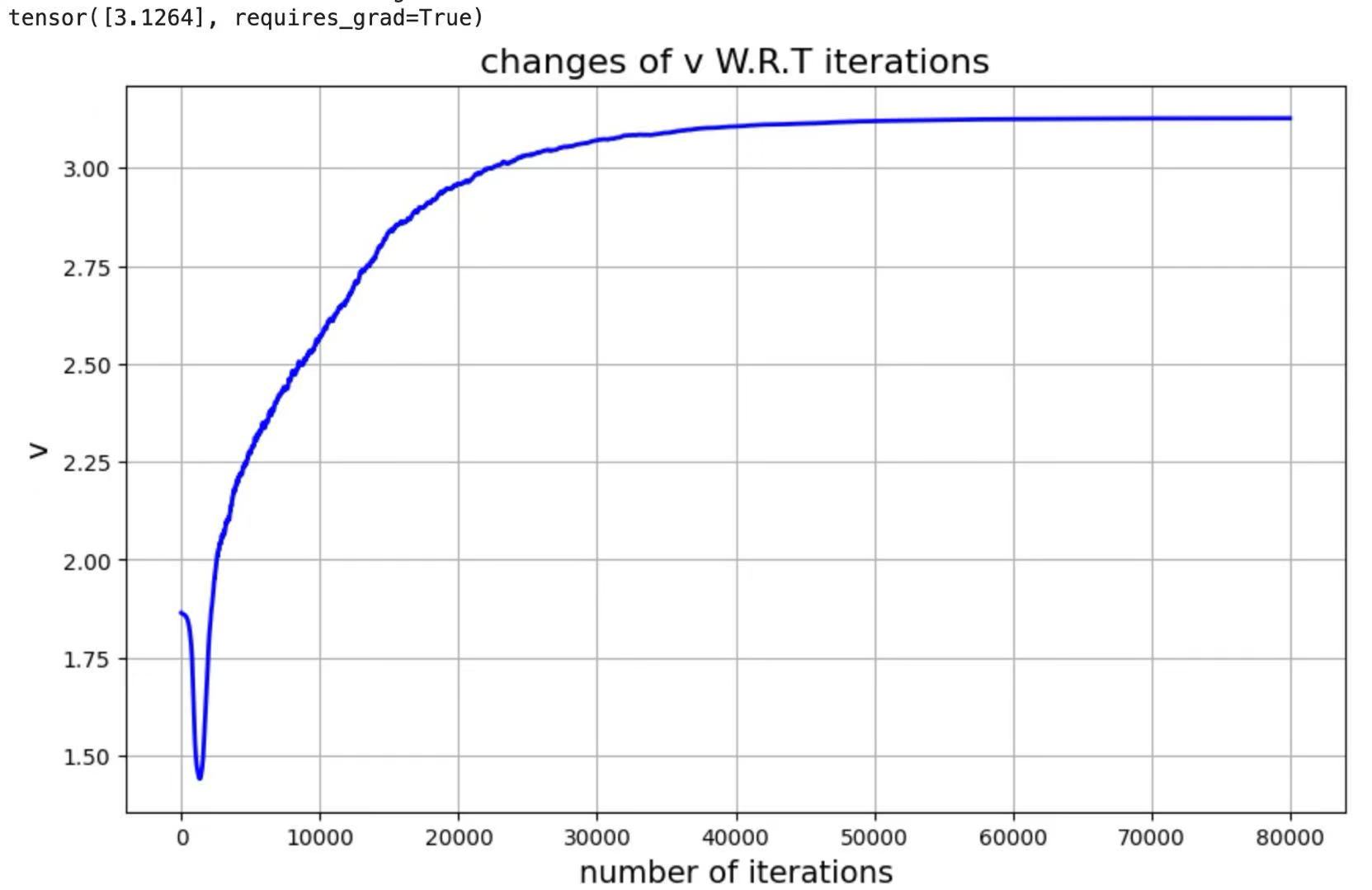}
    \caption{The evolution of $v$ across successive iterations, showing a convergent trend to 3.1264 over 0 to 80000 iterations. }
    \label{fig:prediction_v}
\end{figure}

\begin{figure}[htbp]
    \centering
    \includegraphics[width=0.9\textwidth]{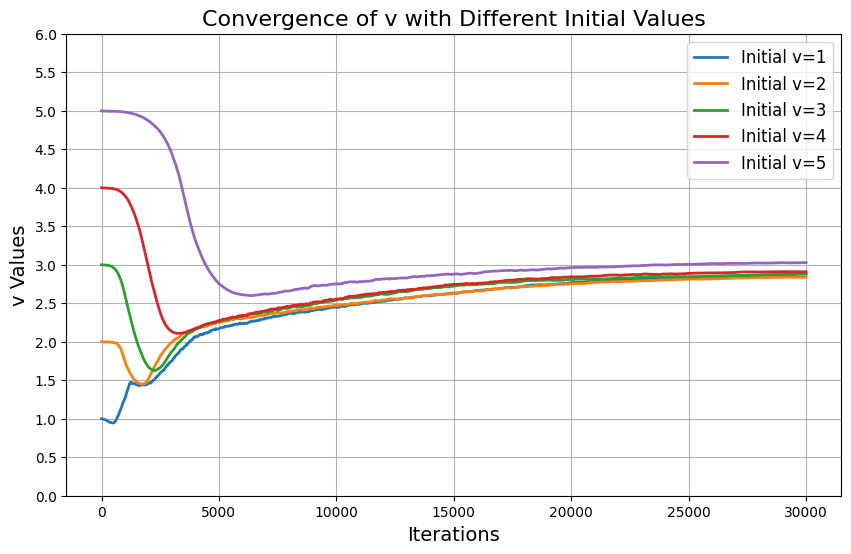}
    \caption{The convergence of $v$ with different initial values across  iterations}
    \label{fig:duo_v}
\end{figure}

To evaluate the robustness of the parameter estimation to initialization, we conduct experiments with varying initial values for parameter 
$v ={1,2,3,4,5}$. The model is trained on the experimental dataset described in Section \ref{Observed Data and Problem Setup}, using the binary cross-entropy (BCE) loss. As shown in (Figure~\ref{fig:duo_v}), the estimated values of $v$ converge to a narrow range around approximately 2.9 - 3.0 after ~25,000 epochs, indicating that the final solution exhibits a degree of invariance to the chosen starting point under the specified experimental conditions.

To evaluate the prediction performance of the trained PINNs model on real data, we use the inferred value of the proliferation rate $v$ to simulate tumor growth forward in time using the previous developed numerical solver by Xu'an Dou. Based on the trained model, we predict the tumor profile and tumor boundary at time points $t=0.875$ and $t=1$, which are depicted in Figure~\ref{fig:0.875} and Figure~\ref{fig:1}. 

\begin{figure}[htbp]
    \centering
    \includegraphics[width=1\textwidth]{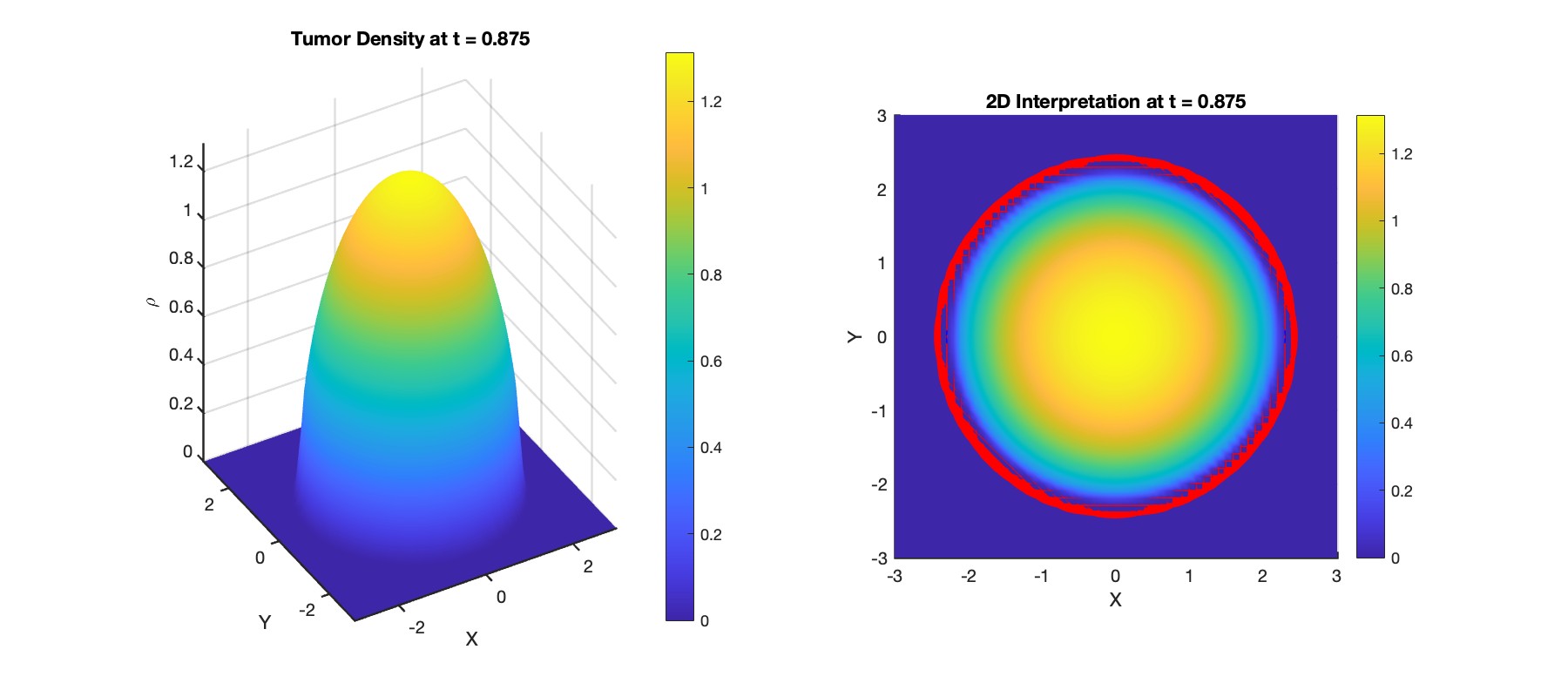}
    \caption{The left panel displays the spatial distribution of tumor density at time t = 0.875, where color intensity or numerical values represent varying density levels. The right panel provides a simplified 2D-interpretation: the red circle indicates the threshold-radius of predicted tumor presence versus absence at t=0.875.}
    \label{fig:0.875}
\end{figure}

\begin{figure}[htbp]
    \centering
    \includegraphics[width=1\textwidth]{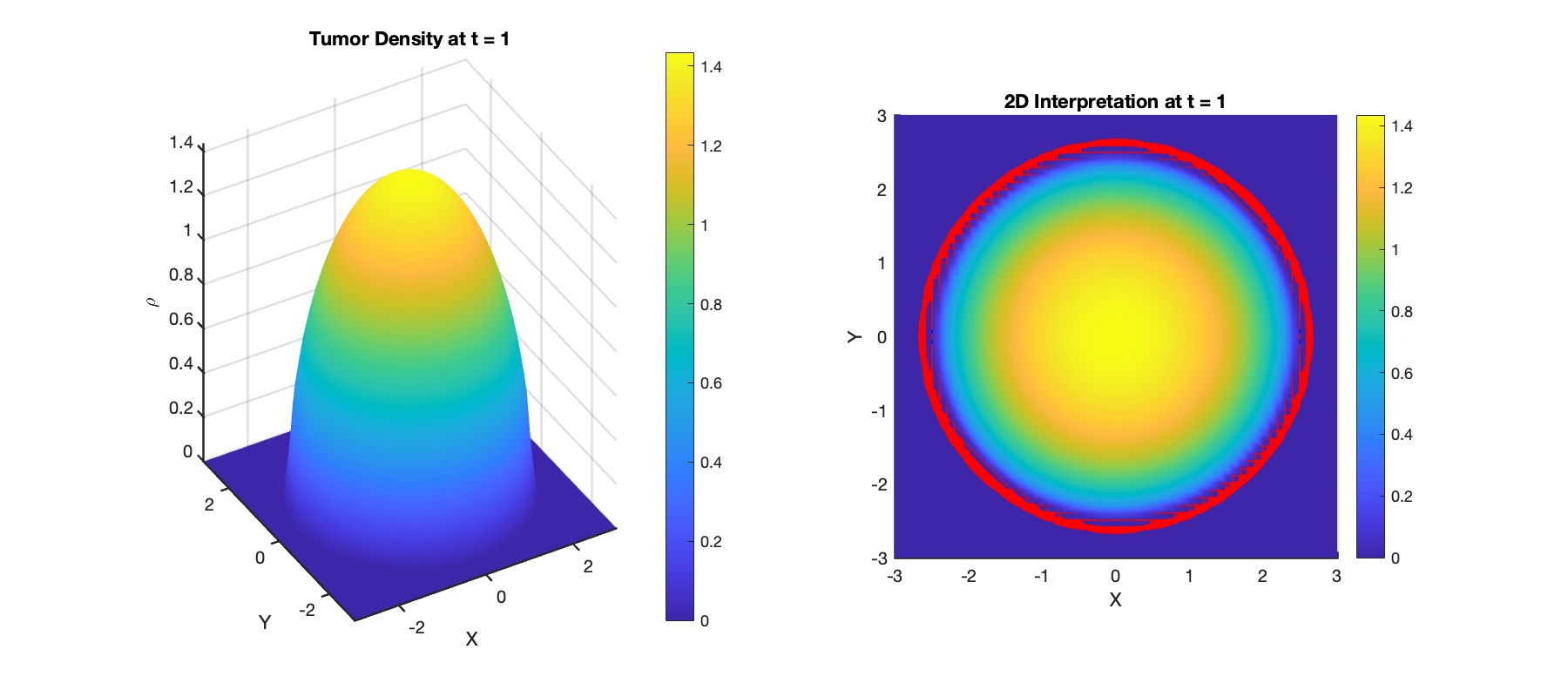}
    \caption{The left panel displays the spatial distribution of tumor density at time t = 1, where color intensity or numerical values represent varying density levels. The right panel provides a simplified 2D-interpretation: the red circle indicates the threshold-radius of predicted tumor presence versus absence at t=1. }
    \label{fig:1}
\end{figure}

Visually, the predicted tumor shapes align well with the observed data. Moreover, we plot the red circle with tumor density equal to 0.1 in the 2D-interpretation to denote the threshold of tumor presence versus tumor absence. Further more, to quantitatively assess the accuracy, we compute the relative error between the predicted and observed tumor radius at $t=0.875$ and $t=1$. The predicted tumor radius and relative error compared to the observed data (Table~\ref{tab:observed data}) are recorded in Table~\ref{tab:radius_comparison}. 
\begin{table}[h]
\centering
\begin{tabular}{l S[table-format=1.4] S[table-format=1.4] S[table-format=2.2]}
\toprule
{Time Point} & {Predicted Radius} & {Observed Radius} & {Relative Error (\%)} \\
\midrule
$t = 0.875$ & 2.2308 & 2.13 & 4.732 \\
$t = 1.0$   & 2.4426 & 2.5 & 2.296 \\
\bottomrule
\end{tabular}
\caption{Comparison between predicted and observed tumor radius with relative error.}
\label{tab:radius_comparison}
\end{table}

According to Table~\ref{tab:radius_comparison}, The relative errors are found to be small, which are 4.732\% for $t = 0.875$ and 2.296\% for $t = 1$, indicating that the trained model has good predictive capability and is able to capture the underlying dynamics of tumor progression even under limited data. This demonstrates the reliability and generalization ability of our PINNs framework when applied to real-world tumor growth data.

\subsection{Physics-Informed DeepONet for parameter identification}

In addition to the standard PINNs framework, we also explore an alternative deep learning architecture known as the Physics-Informed Deep Operator Network (PI-DeepONet) for the same inverse problem of estimating the proliferation rate \(v\). DeepONet, introduced by Lu et al.~\cite{lu2021learning}, is founded on the universal approximation theorem for operators, which guarantees that a neural network can approximate nonlinear operators mapping between infinite-dimensional function spaces. Unlike conventional PINNs that learn a single solution function, DeepONet learns the solution \emph{operator}: given the initial condition as input, it can predict the entire spatiotemporal tumor density field. This operator learning paradigm could offer advantages when solutions are required for multiple initial conditions or when real-time prediction is needed \cite{he2023novel}.

A standard DeepONet consists of two subnetworks: a branch net that encodes the input function (the initial tumor density \(\rho_0(x,y)\) evaluated at sensor points), and a trunk net that encodes the query coordinates \((x,y,t)\). Their dot product with a bias yields the predicted solution. To adapt DeepONet for inverse problems, we treat the unknown proliferation rate \(v\) as a learnable scalar parameter embedded within the PDE residual loss, analogous to the PINNs approach. The total loss function follows the same structure as defined in equation~\ref{eq: loss funtion}, comprising the PDE residual loss, initial condition loss, boundary condition loss, and data loss. The key distinction lies in the network architecture: the branch net encodes the initial condition vector of length \(n_{\text{sensors}} = N_x \times N_y\) (\(61 \times 61 = 3721\) sensor points), while the trunk net takes the three-dimensional coordinate input \((t,x,y)\). Both subnets employ 3 hidden layers with 128 neurons each and Tanh activation, producing a shared output dimension \(p = 64\). The network is implemented in PyTorch and trained using the RAdam optimizer with a StepLR scheduler (gamma = 0.5 every 10,000 epochs) for a total of 50,000 epochs. The loss weights are set to \(w_1 = 5\), \(w_2 = 1\), \(w_3 = 1\), and \(w_4 = 1\). The initial guess for \(v\) is set to 0. 

\begin{figure}[htbp]
    \centering
    % Replace with your actual figure file
    \includegraphics[width=1\textwidth]{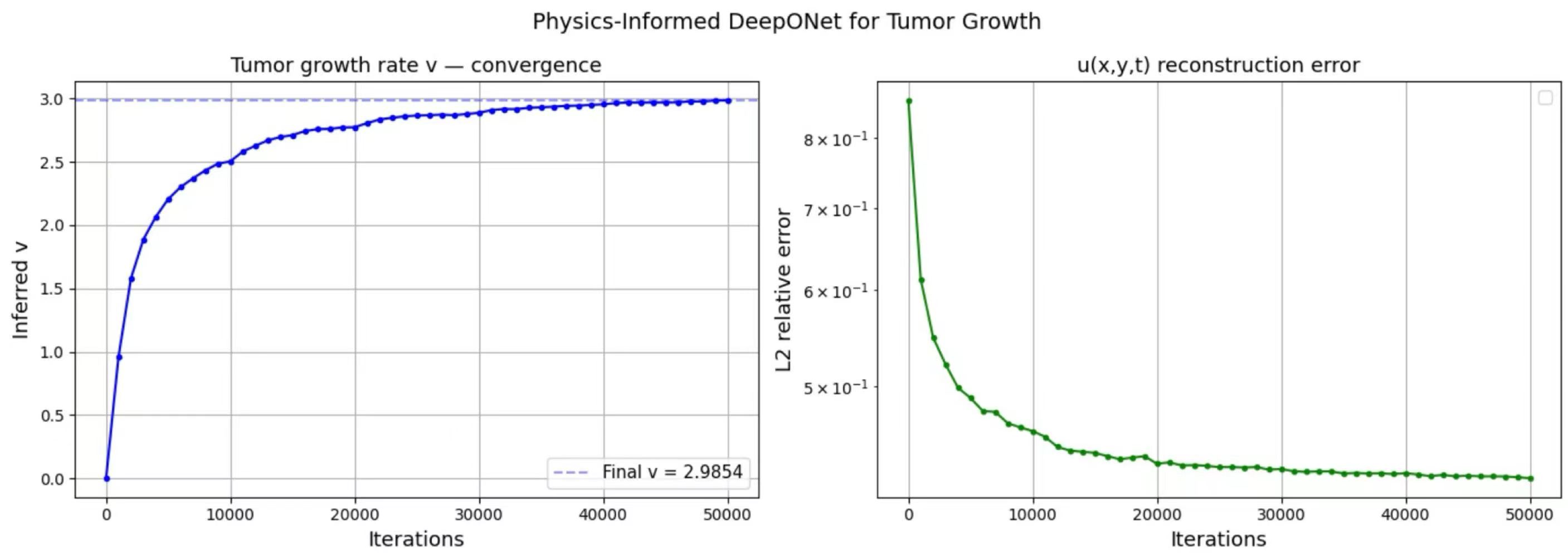}
    \caption{Physics-Informed DeepONet results for parameter identification. (a) Convergence of the inferred proliferation rate \(v\). (b) L2 relative error of the predicted tumor density field \(\rho(x,y,t)\) over training iterations.}
    \label{fig:deeponet_results}
\end{figure}

Figure~\ref{fig:deeponet_results} presents the convergence behavior of the inferred proliferation rate \(v\) using PI-DeepONet, alongside the reconstruction error of the tumor density field. The inferred value of \(v\) converges to a value nearly identical to the PINNs result. The L2 relative error in reconstructing the full spatiotemporal tumor density field decreases steadily over training, achieving a final accuracy comparable to that of the standard PINN framework. This indicates that PI-DeepONet achieves comparable accuracy to the standard PINNs framework for this inverse tumor growth problem. The operator learning perspective of DeepONet may offer additional flexibility when dealing with varying initial conditions or when rapid inference across multiple scenarios is required, while maintaining the same physics-informed regularization that ensures robust parameter estimation from sparse and noisy data.

\section{Application to multiple unknown parameters}
\label{Application to Multiple Unknown Parameters}
\subsection{Extension to spatially varying proliferation rate}
To further investigate the applicability of our PINNs framework, we extended the original model by allowing the proliferation rate to vary spatially. Biologically, this assumption is reasonable, as the proliferation of tumor cells may be influenced by local nutrient concentration, which can vary across space. Under the radial symmetry assumption, we introduce a spatially dependent proliferation rate modeled as

\begin{equation}
\begin{aligned}
g(x, y) = v_1 + v_2 \sin(\sqrt{x^2 + y^2}),
\end{aligned}
\label{eq: rate_xy}
\end{equation}

where $v_1$ and $v_2$ are two unknown parameters to be inferred.

Then the governing equation becomes:
\begin{equation}
\begin{aligned}
\rho_t - \Delta(\rho^3) = \left(v_1 + v_2 \sin(\sqrt{x^2 + y^2})\right) \rho,
\end{aligned}
\label{eq: duo_pde}
\end{equation}

subject to the same initial condition in Equation \eqref{eq: initial} and homogeneous Dirichlet boundary conditions as described previously.

We use almost the same training strategy and algorithmic framework (see Algorithm~\ref{alg:pinn}) to learn the parameters $v_1$ and $v_2$ from the early-time binary tumor data (training data up to $t = 0.75$), in which the two parameters are first assigned initial values and then updated together with the parameters in neural network. The data loss function is again based on the binary cross-entropy (BCE) loss, which is well-suited for 0/1-type observed data. Moreover, the weights are assigned as 1,1,1 and 4 respectively according to hyperparameter tuning trials. The results are plotted in Figure~\ref{fig:duo_v1_v2}, showing the convergent trend for both $v_1$ and $v_2$ to 7.0968 and -5.9086 respectively.

\begin{figure}[htbp]
    \centering
    \includegraphics[width=1\textwidth]{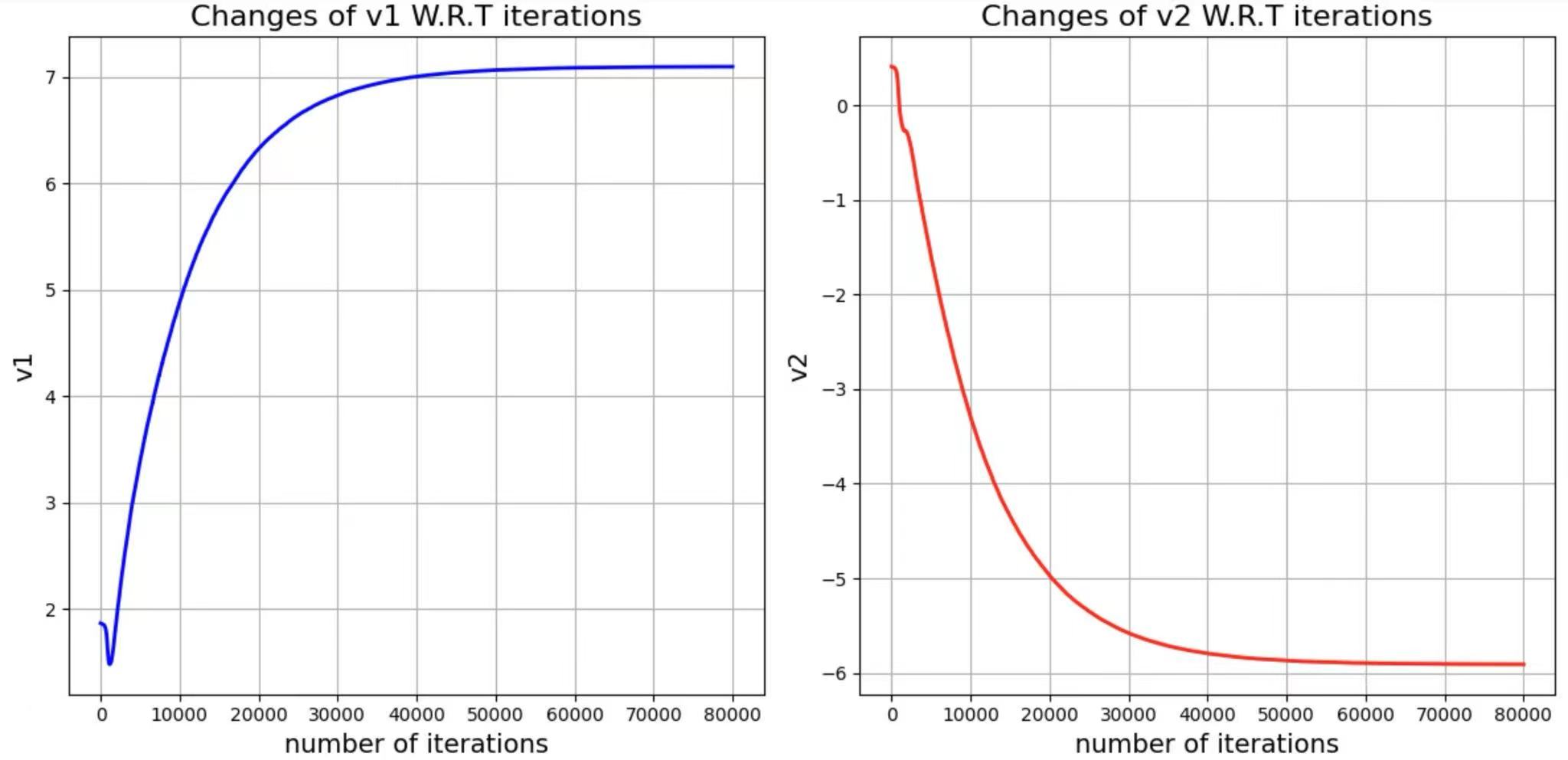}
    \caption{The evolution of the variables $v_1$ and $v_2$ across successive iterations, showing a convergent trend to 7.0968 and -5.9086 over 0 to 80000 iterations.}
    \label{fig:duo_v1_v2}
\end{figure}

After the training, we simulate tumor growth forward using the learned values of $v_1$ and $v_2$ and compare the predicted tumor boundaries at $t=0.875$ and $t=1$ against the ground truth values, which are recorded in Table~\ref{tab:radius_comparison_duo}.

\begin{table}[h]
\centering
\begin{tabular}{l S[table-format=1.4] S[table-format=1.4] S[table-format=2.2]}
\toprule
{Time Point} & {Predicted Radius} & {Observed Radius} & {Relative Error (\%)} \\
\midrule
$t = 0.875$ & 2.2149 & 2.13 & 3.986 \\
$t = 1.0$   & 2.3956 & 2.5 & 4.176 \\
\bottomrule
\end{tabular}
\caption{Comparison between predicted and observed tumor radius with relative error.}
\label{tab:radius_comparison_duo}
\end{table}

As shown in Figure~\ref{fig:duo_0.875} and Figure~\ref{fig:duo_1}, the predicted tumor radius closely matches the real ones, and the computed relative errors are also small, namely 3.986\% for $t=0.875$ and 4.176\% for $t=1$. This demonstrates that the proposed PINNs framework remains robust and reliable even when the underlying model becomes more complex with spatially varying parameters.

\begin{figure}[htbp]
    \centering
    \includegraphics[width=1\textwidth]{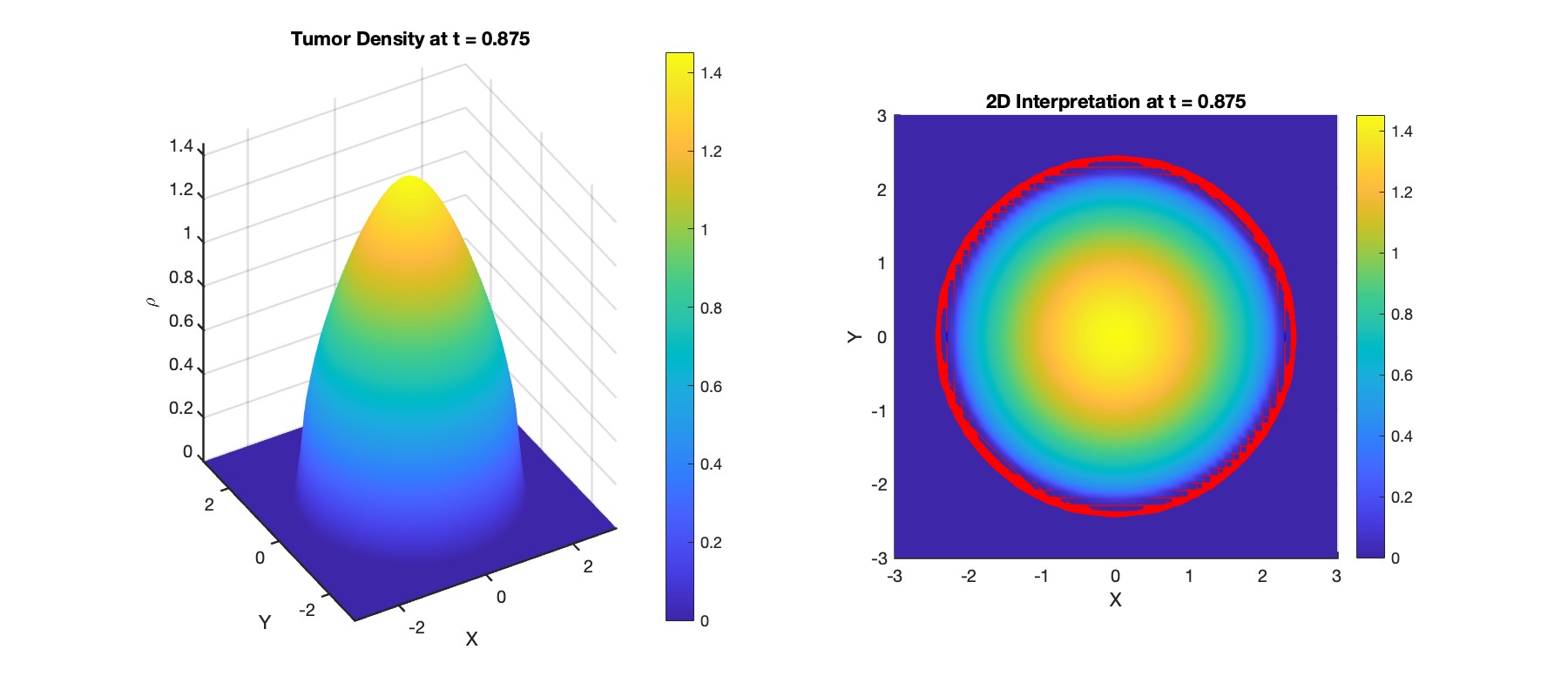}
    \caption{The left panel displays the spatial distribution of tumor density at time t = 0.875, where color intensity or numerical values represent varying density levels. The right panel provides a simplified 2D interpretation: the red circle indicates the predicted tumor presence versus absence, the threshold-radius at t=0.875. }
    \label{fig:duo_0.875}
\end{figure}

\begin{figure}[htbp]
    \centering
    \includegraphics[width=1\textwidth]{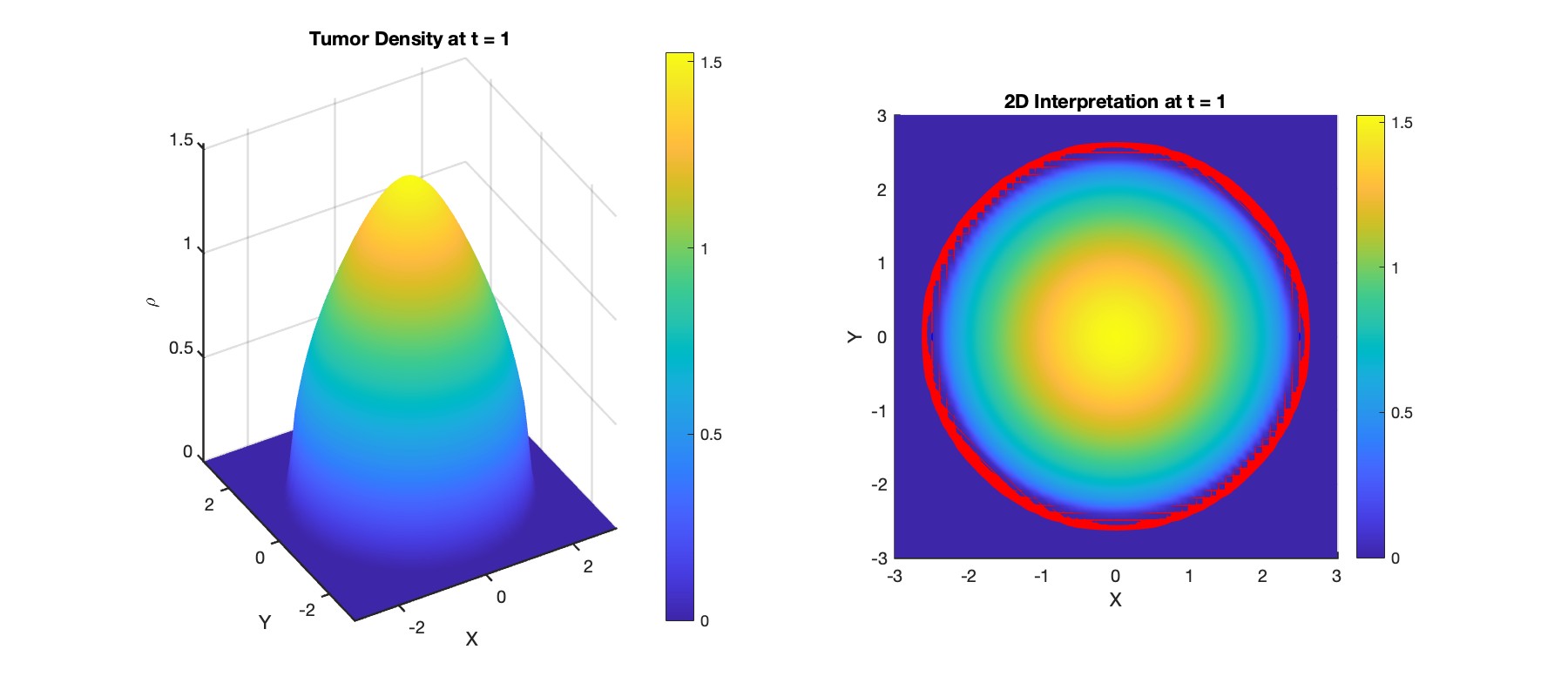}
    \caption{The left panel displays the spatial distribution of tumor density at time t = 1, where color intensity or numerical values represent varying density levels. The right panel provides a simplified 2D interpretation: the red circle indicates the predicted tumor presence versus absence, the threshold-radius at t=1. }
    \label{fig:duo_1}
\end{figure}

\subsection{Incorporating an unknown initial density parameter}

In this subsection, we investigate the case where the initial tumor density is unknown. Specifically, we modify the initial condition in Equation~\eqref{eq: initial} by introducing a parameter $a$ representing the initial density within the tumor region:

\begin{equation}
\rho_0(x,y) = 
\begin{cases} 
a, & \text{if } x^2 + y^2 < 0.25, \\
0, & \text{otherwise},
\end{cases}
\end{equation}

while keeping the original PDE (Equation~\eqref{eq: 3_PDE}) with a single unknown parameter $v$. This setup reflects scenarios where the initial tumor density is not directly measurable but must be inferred alongside the proliferation rate.

To train the PINNs model, we adopt the same framework as in Algorithm~\ref{alg:pinn}, together with the application of binary cross-entropy (BCE) loss, but now optimize both $v$ and $a$ simultaneously. The weights for the loss components are also tuned to 1, 1, 1 and 5 to balance the contributions from the physical constraints and the data fidelity.

The results demonstrate that the model successfully recovers both $v$ and $a$. Figure \ref{fig:va} shows the convergence trajectories of the parameters during training, with the convergence of $v$ to 3.1441 and $a$ to 0.3754.

\begin{figure}[h]
\centering
\includegraphics[width=1\textwidth]{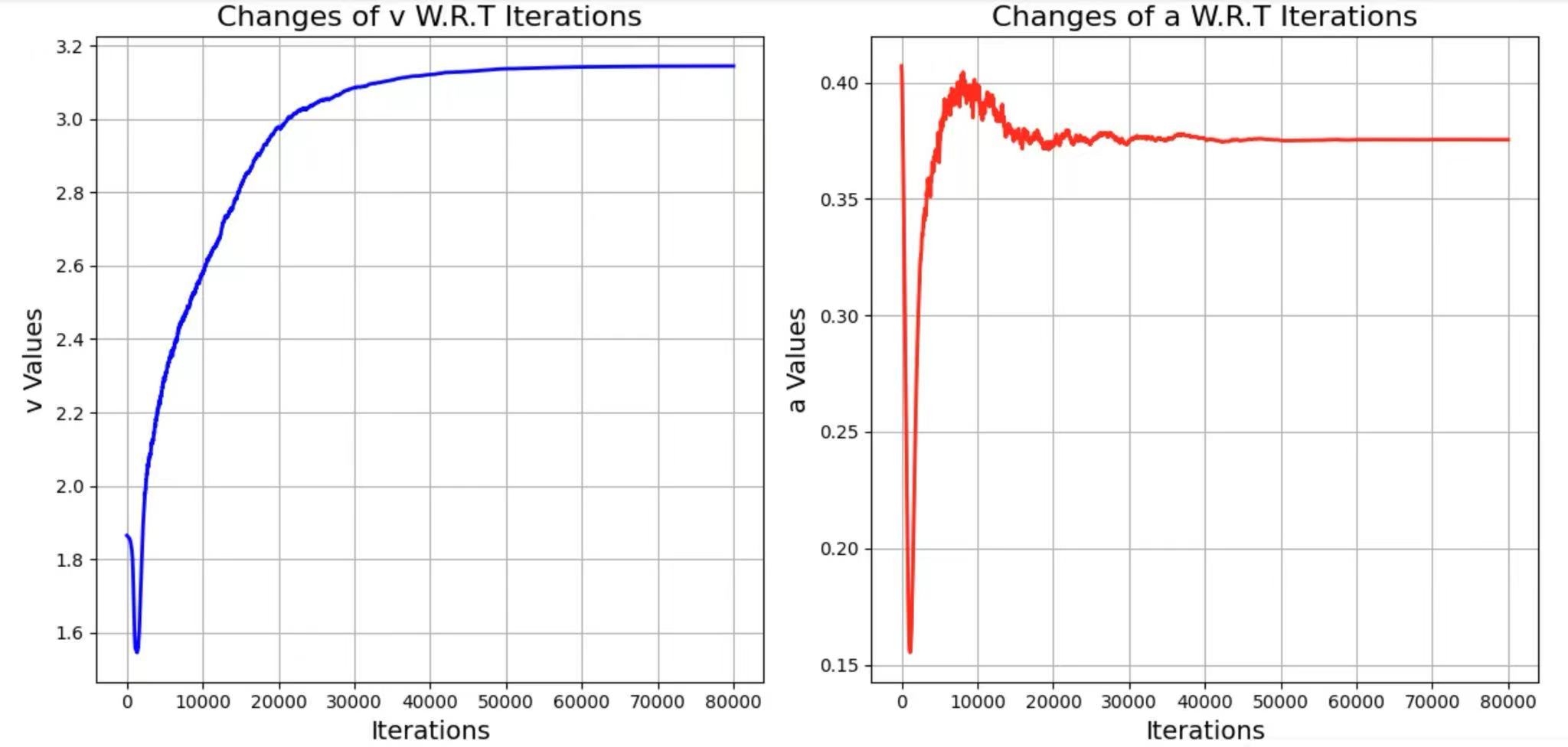}
\caption{The evolution of the variables $v$ and $a$ across successive iterations, showing a convergent
trend to 3.1441 and 0.3754 over 0 to 80000 iterations.}
\label{fig:va}
\end{figure}

This extension highlights the flexibility of the PINNs framework in handling additional unknown parameters, even when they are embedded in the initial conditions rather than the governing PDE.

\section{Conclusion}
\label{Conclusion}
This study demonstrates the potential of adopting PINNs for parameters estimation in tumor growth models. The framework has been validated through numerical experiments, showing an accurate recovery of proliferation rates from synthetic data. Furthermore, when applied to experimental tumor data measured in lab, the method 
can yield tumor radius predictions within 
a relatively low error compared with observed data. These results suggest that PINNs offer a powerful approach especially under the challenging scenarios when only scarce and noisy data is available in biomedical applications, providing a useful tool to solve both forward and inverse problems for tumor growth models. This research has enlightened a new pathway to tumor growth study and modeling with the use of real-life lab measurement data. 

While this work validates the PINNs and DeepONet frameworks for parameter identification in tumor growth models, it is essential to acknowledge their operational limitations. First, regarding noise and data sparsity, while our findings indicate that the physics-informed regularization effectively stabilizes the training process, the performance may depend on the signal-to-noise ratio. Beyond a certain noise threshold, the PDE residual loss may conflict with the data loss, potentially leading to slower convergence. Second, concerning scalability, our current implementation focuses on a 2D spatial domain. Extending it to 3D and more complex models increases the computational demand for sampling and neural network training. Future research will explore higher dimensional problems. Finally, the performance may depend on the initialization of the neural network parameters; while we observed consistent convergence across various initial guesses in our experiments, more complex landscapes require advanced optimization strategies, such as multi-stage training or adaptive weight tuning. These topics will be studied in our future work.

\section*{Acknowledgement}
This paper is supported by National Key R\&D Program of China (2021YFA1001200). All the codes and data can be found at {\cite{github1}}. We gratefully acknowledge Xu'an Dou for generously providing the numerical solver codes that underpins this research, which can also be found at \cite{github1}. We would also like to thank Yingxue Li's lab for providing the lab data for tumor. L.~Liu acknowledges the support by National Key R\&D Program of China (2021YFA1001200), Ministry of Science and Technology in China, General Research Fund (14301423 \& 14307125) funded by Research Grants Council of Hong Kong. X.~Xu was partially supported by National Key R\&D Program of China (2021YFA1001200) and Kunshan Shuangchuang Talent Program (kssc202102066).

\appendix
\section{Supplementary Hyperparameter Analysis}
\label{app:robustness}

The loss weights \(w_1, w_2, w_3, w_4\) in equation~\ref{eq: loss funtion} balance the contributions of the PDE residual, initial condition, boundary condition, and data fidelity. In principle, optimal weights should reflect the relative scales of each loss component and the desired trade-off between physical constraints and data fit. We initially performed a grid search over a small range of candidate weights, monitoring both the final L2 reconstruction error and the convergence of the inferred proliferation rate \(v\). The chosen configuration in our work was selected specifically to better minimize the L2 relative error and the total loss function value, and it additionally exhibited very stable convergence behavior.

To further demonstrate that our results are not overly sensitive to the exact weight values, we conducted additional experiments with multiple alternative weight combinations. The detailed configurations and resulting performance are summarized in Table~\ref{tab:weight_sensitivity} and Table~\ref{tab:weight_sensitivity2}. Table~\ref{tab:weight_sensitivity} reports results on synthetic data. In this controlled experiment, the true underlying parameter is set to \( v = 1.7 \). By varying the loss weight configurations, we examine how the predicted value \( v_{\text{predict}} \) deviates from the ground truth. Table~\ref{tab:weight_sensitivity2} reports results on real experimental data. Also, we examine the variation in \( v_{\text{predict}} \) across different weight configurations. The purpose is to verify that our inference method is not overly sensitive to the specific choice of weights.

Both tables share the same column structure:

\begin{itemize}
    \item \( l_i \) (PDE Loss Weight): Weight assigned to the physics-informed loss term, which enforces the governing partial differential equation. Higher values force the solution to adhere more strictly to the PDE.

    \item \( l_{\text{initial}} \) (Initial Condition Loss Weight): Weight for the loss term that enforces the initial condition at \( t = 0 \). This ensures the solution matches the prescribed initial state.

    \item \( l_{\text{dn}} \) (Downstream Boundary Loss Weight): Weight for the boundary condition loss on the \textit{downstream} boundary.

    \item \( l_u \) (Upstream Boundary Loss Weight): Weight for the boundary condition loss on the \textit{upstream} boundary.

    \item \( l_l \) (Left Boundary Loss Weight): Weight for the boundary condition loss on the left side of the domain.

    \item \( l_r \) (Right Boundary Loss Weight): Weight for the boundary condition loss on the right side of the domain.

    \item \( l_{\text{data}} \) (Data Loss Weight): Weight assigned to the data loss term, which measures the discrepancy between model predictions and actual measurements. Increasing this weight forces the solution to fit the observed data more closely.

\end{itemize}

While the final inferred \(v\) varies slightly, all tested combinations yield results within a narrow range, confirming the robustness of our conclusions to reasonable variations in loss weights.

\begin{table}[htbp]
    \centering
    \caption{Different loss weight configurations for the synthetic data (v=1.7).}
    \label{tab:weight_sensitivity}
    \begin{tabular}{cccccccc}
        \toprule
        \(l_i\) & \(l_{\text{initial}}\) & \(l_{\text{dn}}\) & \(l_{\text{u}}\) & \(l_{\text{l}}\) & \(l_{\text{r}}\) & \(l_{\text{data}}\) & \(v_{\text{predict}}\) \\
        \midrule
        10 & 1.0 & 1.0 & 1.0 & 1.0 & 1.0 & 10 & 1.4941 \\
        10 & 1.0 & 1.0 & 1.0 & 1.0 & 1.0 & 50 & 1.7118 \\
        10 & 1.0 & 1.0 & 1.0 & 1.0 & 1.0 & 100 & 1.7650 \\
        40 & 1.0 & 1.0 & 1.0 & 1.0 & 1.0 & 40 & 1.5357 \\
        \bottomrule
    \end{tabular}
\end{table}

\begin{table}[htbp]
    \centering
    \caption{Different loss weight configurations for the real data.}
    \label{tab:weight_sensitivity2}
    \begin{tabular}{cccccccc}
        \toprule
        \(l_i\) & \(l_{\text{initial}}\) & \(l_{\text{dn}}\) & \(l_{\text{u}}\) & \(l_{\text{l}}\) & \(l_{\text{r}}\) & \(l_{\text{data}}\) & \(v_{\text{predict}}\) \\
        \midrule
        1.0 & 1.0 & 1.0 & 1.0 & 1.0 & 1.0 & 1.0 & 2.3872 \\
        1.0 & 1.0 & 1.0 & 1.0 & 1.0 & 1.0 & 5 & 3.1264 \\
        1.0 & 1.0 & 1.0 & 1.0 & 1.0 & 1.0 & 10 &3.2620 \\
        5 & 1.0 & 1.0 & 1.0 & 1.0 & 1.0 & 10 & 2.7867 \\
        \bottomrule
    \end{tabular}
\end{table}

The initial learning rate was set to \(10^{-3}\) based on standard recommendations for Adam-type optimizers in PINNs applications \cite{raissi2019physics}. We employed a StepLR scheduler, reducing the learning rate by a factor of \(0.9\) every 1000 epochs for the PINNs, and by \(0.5\) every 10,000 epochs for the PI-DeepONet. This scheduler was chosen after a coarse scan over constant learning rates (\(10^{-2}\), \(10^{-3}\), \(10^{-4}\)); the \(10^{-3}\) value with step decay provided faster and more stable loss reduction without divergence. A constant higher rate led to oscillatory loss, while a constant lower rate resulted in extremely slow convergence. The specific decay schedule was determined by monitoring the loss plateau behavior during preliminary runs, a practice consistent with prior inverse PINNs studies \cite {liu2025asymptotic}, \cite{wang2024neural}.

For the PINNs, we used a feedforward network with 3 hidden layers of 64 neurons each and Tanh activation. For the PI-DeepONet, both branch and trunk nets employed 3 hidden layers of 128 neurons each, with a shared output dimension \(p=64\). These architectures were selected through a limited grid search over layer counts (2–5), layer widths (32–256), and output dimensions \(p\) (32–128). The chosen configurations gave the best trade-off between expressivity and training stability, as measured by final validation L2 error and convergence speed. Larger networks (e.g., 5 layers or 256 neurons) did not improve accuracy but increased training time and risk of overfitting, while smaller networks (2 layers or 32 neurons) led to underfitting with visibly higher reconstruction errors.

\bibliographystyle{plain}
\bibliography{reference} 

@article{greenspan1972models,
  title={Models for the growth of a solid tumor by diffusion},
  author={Greenspan, Harvey P},
  journal={Studies in Applied Mathematics},
  volume={51},
  number={4},
  pages={317--340},
  year={1972},
  publisher={Wiley Online Library}
}

@article{CLKD2020,
  author  = {Chen, Y. and Lu, L. and Karniadakis, G. E. and Dal Negro, L.},
  title   = {Physics-informed neural networks for inverse problems in nano-optics and metamaterials},
  journal = {Optics Express},
  year    = {2020},
  volume  = {28},
  pages   = {11618--11633}
}

@article{MPG2019,
  author  = {Raissi, M. and Perdikaris, P. and Karniadakis, G. E.},
  title   = {Physics-informed neural networks: A deep learning framework for solving forward and inverse problems involving nonlinear partial differential equations},
  journal = {J. Comput. Phys.},
  year    = {2019},
  volume  = {378},
  pages   = {686--707}
}

@article{falco2023quantifying,
  title={Quantifying tissue growth, shape and collision via continuum models and {B}ayesian inference},
  author={Falc{\'o}, Carles and Cohen, Daniel J and Carrillo, Jos{\'e} A and Baker, Ruth E},
  journal={Journal of the Royal Society Interface},
  volume={20},
  number={204},
  pages={20230184},
  year={2023},
  publisher={The Royal Society}
}

@article{feng2024unified,
  title={A unified {B}ayesian inversion approach for a class of tumor growth models with different pressure laws},
  author={Feng, Yu and Liu, Liu and Zhou, Zhennan},
  journal={ESAIM: Mathematical Modelling and Numerical Analysis},
  volume={58},
  number={2},
  pages={613--638},
  year={2024},
  publisher={EDP Sciences}
}

@article{UAT, 
 Author={Cybenko, George}, 
 title={Approximation by superpositions of a sigmoidal function}, 
 journal={Math. Control Signal Systems}, 
 volume={2}, 
 pages={303--314}, 
 year={1989}
 }

@article{liu2018accurate,
  title={An accurate front capturing scheme for tumor growth models with a free boundary limit},
  author={Liu, Jian-Guo and Tang, Min and Wang, Li and Zhou, Zhennan},
  journal={Journal of Computational Physics},
  volume={364},
  pages={73--94},
  year={2018},
  publisher={Elsevier}
}

@book{vazquez2007porous,
  title={The porous medium equation: mathematical theory},
  author={V{\'a}zquez, Juan Luis},
  year={2007},
  publisher={Oxford university press}
}

@article{liu2025asymptotic,
  title={Asymptotic-preserving neural networks for the semiconductor {B}oltzmann equation and its application on inverse problems},
  author={Liu, Liu and Wang, Yating and Zhu, Xueyu and Zhu, Zhenyi},
  journal={Journal of computational physics},
  volume={523},
  pages={113669},
  year={2025},
  publisher={Elsevier}
}

@article{raissi2019physics,
  title={Physics-informed neural networks: A deep learning framework for solving forward and inverse problems involving nonlinear partial differential equations},
  author={Raissi, Maziar and Perdikaris, Paris and Karniadakis, George E},
  journal={Journal of Computational physics},
  volume={378},
  pages={686--707},
  year={2019},
  publisher={Elsevier}
}

@article{kissas2020machine,
  title={Machine learning in cardiovascular flows modeling: Predicting arterial blood pressure from non-invasive 4{D} flow {MRI} data using physics-informed neural networks},
  author={Kissas, Georgios and Yang, Yibo and Hwuang, Eileen and Witschey, Walter R and Detre, John A and Perdikaris, Paris},
  journal={Computer methods in applied mechanics and engineering},
  volume={358},
  pages={112623},
  year={2020},
  publisher={Elsevier}
}

@article{sahli2020physics,
  title={Physics-informed neural networks for cardiac activation mapping},
  author={Sahli Costabal, Francisco and Yang, Yibo and Perdikaris, Paris and Hurtado, Daniel E and Kuhl, Ellen},
  journal={Frontiers in Physics},
  volume={8},
  pages={42},
  year={2020},
  publisher={Frontiers Media SA}
}

@article{kobayasi2006kinetic,
  title={A kinetic approach to comparison properties for degenerate parabolic--hyperbolic equations with boundary conditions},
  author={Kobayasi, Kazuo},
  journal={Journal of Differential Equations},
  volume={230},
  number={2},
  pages={682--701},
  year={2006},
  publisher={Elsevier}
}

@misc{github1,
  author = {},
  title = {},
  howpublished = "\url{https://github.com/Qinyu57/Updated-Tumor-Growth-PINNS}",
  year = {}, 
}

@article{wang2024neural,
  title={On a neural network approach for solving potential control problem of the semiclassical Schr{\"o}dinger equation},
  author={Wang, Yating and Liu, Liu},
  journal={Journal of Computational and Applied Mathematics},
  volume={438},
  pages={115504},
  year={2024},
  publisher={Elsevier}
}

@article{liu2019analysis,
  title   = {Analysis and computation of some tumor growth models with nutrient: From cell density models to free boundary dynamics},
  author  = {Liu, Jian-Guo and Tang, Min and Wang, Li and Zhou, Zhennan},
  journal = {Discrete and Continuous Dynamical Systems - B},
  volume  = {24},
  number  = {7},
  pages   = {3011--3035},
  year    = {2019},
  issn    = {1531-3492},
  doi     = {10.3934/dcdsb.2018297},
  url     = {https://www.aimsciences.org/article/id/9e62330c-1ee6-493d-9e32-87b732ba0d60},
  keywords = {Tumor growth model, free boundary limit, Hele-Shaw flow model}
}

@article{chen2020physics,
  title={Physics-informed neural networks for inverse problems in nano-optics and metamaterials},
  author={Chen, Yuyao and Lu, Lu and Karniadakis, George Em and Dal Negro, Luca},
  journal={Optics express},
  volume={28},
  number={8},
  pages={11618--11633},
  year={2020},
  publisher={OSA}
}

@article{lou2021physics,
  title={Physics-informed neural networks for solving forward and inverse flow problems via the {B}oltzmann-{BGK} formulation},
  author={Lou, Qin and Meng, Xuhui and Karniadakis, George Em},
  journal={Journal of Computational Physics},
  volume={447},
  pages={110676},
  year={2021},
  publisher={Elsevier}
}

@article{mao2020physics,
  title={Physics-informed neural networks for high-speed flows},
  author={Mao, Zhiping and Jagtap, Ameya D and Karniadakis, George Em},
  journal={Computer Methods in Applied Mechanics and Engineering},
  volume={360},
  pages={112789},
  year={2020},
  publisher={Elsevier}
}

@article{cristini2003nonlinear,
  title={Nonlinear simulation of tumor growth},
  author={Cristini, Vittorio and Lowengrub, John and Nie, Qing},
  journal={Journal of mathematical biology},
  volume={46},
  number={3},
  pages={191--224},
  year={2003},
  publisher={Springer}
}

@article{friedman2001symmetry,
  title={Symmetry-breaking bifurcation of analytic solutions to free boundary problems: an application to a model of tumor growth},
  author={Friedman, Avner and Reitich, Fernando},
  journal={Transactions of the American Mathematical Society},
  volume={353},
  number={4},
  pages={1587--1634},
  year={2001}
}

@book{cristini2010multiscale,
  title={Multiscale modeling of cancer: an integrated experimental and mathematical modeling approach},
  author={Cristini, Vittorio and Lowengrub, John},
  year={2010},
  publisher={Cambridge University Press}
}

@book{cristini2017introduction,
  title={An introduction to physical oncology: How mechanistic mathematical modeling can improve cancer therapy outcomes},
  author={Cristini, Vittorio and Koay, Eugene and Wang, Zhihui},
  year={2017},
  publisher={CRC Press}
}

@article{araujo2004history,
  title={A history of the study of solid tumour growth: the contribution of mathematical modelling},
  author={Araujo, Robyn P and McElwain, DL Sean},
  journal={Bulletin of mathematical biology},
  volume={66},
  number={5},
  pages={1039--1091},
  year={2004},
  publisher={Elsevier}
}

@article{byrne2006modelling,
  title={Modelling aspects of cancer dynamics: a review},
  author={Byrne, Helen M and Alarcon, Tomas and Owen, Markus R and Webb, Steven D and Maini, Philip K},
  journal={Philosophical Transactions of the Royal Society A: Mathematical, Physical and Engineering Sciences},
  volume={364},
  number={1843},
  pages={1563--1578},
  year={2006},
  publisher={The Royal Society London}
}

@article{roose2007mathematical,
  title={Mathematical models of avascular tumor growth},
  author={Roose, Tiina and Chapman, S Jonathan and Maini, Philip K},
  journal={SIAM review},
  volume={49},
  number={2},
  pages={179--208},
  year={2007},
  publisher={SIAM}
}

@article{lowengrub2009nonlinear,
  title={Nonlinear modelling of cancer: bridging the gap between cells and tumours},
  author={Lowengrub, John S and Frieboes, Hermann B and Jin, Fang and Chuang, Yao-Li and Li, Xiangrong and Macklin, Paul and Wise, Steven M and Cristini, Vittorio},
  journal={Nonlinearity},
  volume={23},
  number={1},
  pages={R1},
  year={2009},
  publisher={IOP Publishing}
}

@article{perthame2016some,
  title={Some mathematical models of tumor growth},
  author={Perthame, Beno{\^\i}t},
  journal={Universit{\'e} Pierre et Marie Curie-Paris},
  volume={6},
  year={2016}
}

@article{lu2021learning,
  title={Learning nonlinear operators via DeepONet based on the universal approximation theorem of operators},
  author={Lu, Lu and Jin, Pengzhan and Pang, Guofei and Zhang, Zhongqiang and Karniadakis, George Em},
  journal={Nature machine intelligence},
  volume={3},
  number={3},
  pages={218--229},
  year={2021},
  publisher={Nature Publishing Group UK London}
}

@article{he2023novel,
  title={Novel DeepONet architecture to predict stresses in elastoplastic structures with variable complex geometries and loads},
  author={He, Junyan and Koric, Seid and Kushwaha, Shashank and Park, Jaewan and Abueidda, Diab and Jasiuk, Iwona},
  journal={Computer Methods in Applied Mechanics and Engineering},
  volume={415},
  pages={116277},
  year={2023},
  publisher={Elsevier}
}
\end{document}